\documentclass[11pt,letterpaper,reqno]{amsart}

\usepackage{amssymb,amsmath,amsthm,amsfonts}
\usepackage{enumitem}
\usepackage{booktabs}
\usepackage{graphicx}
\usepackage[T1]{fontenc}
\usepackage{doi}
\usepackage{float}

\usepackage{tikz}
\usetikzlibrary{positioning, shapes.geometric, arrows.meta}
\usepackage{pgfplots}
\pgfplotsset{compat=1.18}

\usepackage{bookmark}
\usepackage{hyperref}
\hypersetup{pdfstartview={FitH}}

\newtheorem{thm}{Theorem}[section]
\newtheorem{lem}[thm]{Lemma}
\newtheorem{prop}[thm]{Proposition}

\theoremstyle{definition}

\newtheorem{rem}[thm]{Remark}

\numberwithin{equation}{section}

\newcommand{\M}{M}
\newcommand{\one}{1}
\newcommand{\Stau}{S(\tau)}
\newcommand{\Etau}{E(\tau)}
\newcommand{\norm}[1]{\left\lVert #1\right\rVert}
\newcommand{\abs}[1]{\left\lvert #1\right\rvert}
\newcommand{\mn}{\mathbb{M}_n}
\newcommand{\mnp}{\mathbb{M}_n^{+}}
\newcommand{\precprec}{\prec\!\prec}
\newcommand{\uin}[1]{%
	\left|\mkern-1.5mu\left|\mkern-1.5mu\left|#1\right|
	\mkern-1.5mu\right|\mkern-1.5mu\right|%
}
\newcommand{\mut}[2]{\mu(#1;\,#2)}
\newcommand{\dsf}[2]{d(#1;\,#2)}

\DeclareMathOperator{\diag}{diag}

\title[Bourin-type inequalities in symmetric spaces]
{Bourin-type inequalities for $\tau$-measurable operators in fully symmetric spaces}

\author[T.~Zhang]{Teng Zhang}
\address{School of Mathematics and Statistics, Xi'an Jiaotong University, Xi'an 710049, P. R. China}
\email{teng.zhang@stu.xjtu.edu.cn}

\subjclass[2020]{46L52, 46E30, 47A63, 15A60}
\keywords{Heinz mean, Bourin question, unitarily invariant norms, fully symmetric spaces, three-lines lemma}

\begin{document}
	
	\begin{abstract}
		Let $\M\subset B(\mathcal H)$ be a semifinite von Neumann algebra, where $B(\mathcal H)$ denotes the algebra of all bounded linear operators on a Hilbert space $\mathcal H$, and let $\tau$ be a fixed faithful normal semifinite trace on $\M$.
		Let $\Etau$ be the fully symmetric space associated with a fully symmetric Banach function space $E$ on $[0,\infty)$.
		Using a complex interpolation argument based on the three-lines theorem on a strip, we show that for positive operators $a,b$ in $\Etau$ and $t\in[0,1]$, 
		\[
		\|a^t b^{1-t}+b^t a^{1-t}\|_{\Etau}\le 2^{\max\{2|t-1/2|-1/2,\;0\}}\;\|a+b\|_{\Etau},
		\]
		and in particular obtain the sharp constant $1$ for $t\in[1/4, 3/4]$:
		\[
		\|a^t b^{1-t}+b^t a^{1-t}\|_{\Etau}\le \|a+b\|_{\Etau}.
		\]
		This extends the work of Kittaneh--Ricard in [Linear Algebra Appl. \textbf{710} (2025), 356--362] and covers the results of Liu--He--Zhao in [Acta Math. Sci. Ser. B (Engl. Ed.) \textbf{46} (2026), 62--68].
	\end{abstract}
	
	\maketitle
	
	\section{Introduction}
	
	Let $\mn$ be the algebra of complex $n\times n$ matrices and let $\mnp$ be its cone of positive
	semidefinite matrices. A norm $\uin{\cdot}$ on $\mn$ is \emph{unitarily invariant} if
	$\uin{UAV}=\uin{A}$ for all $A\in\mn$ and all unitaries $U,V\in\mn$.
	
	The classical Heinz mean
	\[
	h_t(A,B)=A^tB^{1-t}+A^{1-t}B^t,\qquad A,B\in\mnp,\;t\in[0,1],
	\]
	satisfies the Heinz inequality (see e.g.\ \cite{Bha97,Kos98})
	\begin{equation}\label{eq:Heinz-matrix}
		\uin{h_t(A,B)}\le \uin{A+B}.
	\end{equation}
	Bhatia \cite[p.~265]{Bha97} also recorded the strengthened form with an extra variable $X\in\mn$ inserted between $A$
	and $B$:
	\begin{equation}\label{eq:Heinz-X-matrix}
		\uin{A^tXB^{1-t}+A^{1-t}XB^t}\le \uin{AX+XB},\qquad A,B\in\mnp,\;t\in[0,1].
	\end{equation}
	
Motivated by \eqref{eq:Heinz-matrix}, Bourin \cite{Bou09} asked whether the \emph{crossed} Heinz expression
\[
b_t(A,B):=A^tB^{1-t}+B^tA^{1-t}
\]
satisfies the same norm bound $\uin{b_t(A,B)}\le \uin{A+B}$ for all unitarily invariant norms and all
$t\in[0,1]$. Subsequent developments provided affirmative answers in various parameter ranges and for several
related inequalities; see \cite{HHK21,HHK22,HHK23} and the references therein. More recently, Kittaneh and
Ricard \cite{KR25} confirmed the conjectured bound for $t\in[1/4,3/4]$ and, moreover, proved that for all
$t\in[0,1]$,
\begin{equation}\label{eq:KR-bound}
\uin{A^tB^{1-t}+B^tA^{1-t}}
	\le
	2^{\max\{\,2\abs{t-1/2}-1/2,\;0\,\}}
\uin{A+B}.
\end{equation}

	In this paper, we study these inequalities in the general framework of fully symmetric spaces of
	$\tau$-measurable operators.
	Throughout, $\M\subset B(\mathcal H)$ denotes a semifinite von Neumann algebra with a fixed faithful normal semifinite trace $\tau$, where $B(\mathcal H)$ denotes the algebra of all bounded linear operators on a Hilbert space $\mathcal H$.
	Let $\Stau$ be the $*$-algebra of all $\tau$-measurable operators affiliated with $\M$, see \cite{DDSZ20,FK86}. For $x\ge 0$ in $\Stau$, we mean that $x$ is a positive operator in $\Stau$.
	Given a fully symmetric Banach function space $E$ on $[0,\infty)$, we write $\Etau$ for the associated fully symmetric space
	defined via generalized singular numbers.
	We assume that $E$ (equivalently, $\Etau$) has the Fatou property. 
	
Dodds, Dodds, Sukochev and Zanin \cite{DDSZ20} established the following Heinz-type inequality for $\tau$-measurable operators, which may be viewed as the $\tau$-measurable analogue of \eqref{eq:Heinz-X-matrix}.

	\begin{thm}[Heinz-type inequality in $\Etau$, {\cite[Corollary~5.8(i)]{DDSZ20}}]\label{thm:heinz-Etau}
		Let $a,b\ge 0$ in $\Stau$ and $z\in \Stau$. If $az,zb\in \Etau$, then for every $t\in[0,1]$,
		\[
		\norm{a^t z b^{1-t}+a^{1-t} z b^t}_{\Etau}\le \norm{az+zb}_{\Etau}.
		\]
	\end{thm}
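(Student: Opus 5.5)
This is the cited result \cite[Corollary~5.8(i)]{DDSZ20}. To prove it independently I would follow the classical matrix route of Bhatia--Davis, reducing everything to a single Schur-multiplier (double operator integral) estimate and then using full symmetry of $\Etau$.

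\emph{Reduction to the symmetric point.} First I would reduce to $a=b$ by the $2\times 2$ trick. Work in $\M\otimes\mathbb M_2$ with trace $\tau\otimes\mathrm{tr}$, and set $c=a\oplus b$ and $w=\begin{pmatrix}0&z\\0&0\end{pmatrix}$. Then
\[
cw+wc=\begin{pmatrix}0&az+zb\\0&0\end{pmatrix},\qquad
c^t w c^{1-t}+c^{1-t}wc^t=\begin{pmatrix}0&a^tzb^{1-t}+a^{1-t}zb^t\\0&0\end{pmatrix}.
\]
Off-diagonal corners have the same singular value function as the corner entry, so the $\Etau$ norms agree. It therefore suffices to show, for $c\ge0$ in $S(\tau\otimes\mathrm{tr})$, that
\[
\norm{c^twc^{1-t}+c^{1-t}wc^t}\le\norm{cw+wc}.
\]

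\emph{Double operator integral.} Let $e$ be the spectral measure of $c$. Formally,
\[
c^twc^{1-t}+c^{1-t}wc^t=T_\phi(cw+wc),\qquad
\phi(\lambda,\mu)=\frac{\lambda^t\mu^{1-t}+\lambda^{1-t}\mu^t}{\lambda+\mu}.
\]
Substituting $\lambda=e^{2s}$, $\mu=e^{2r}$ gives $\phi=\cosh((2t-1)(s-r))/\cosh(s-r)$. This is a function of $s-r$, and it is the Fourier transform of a positive measure of total mass $\phi(0)=1$, because $\cosh(\alpha x)/\cosh x$ is positive definite for $|\alpha|\le1$. Hence $T_\phi$ is an average of the maps $y\mapsto c^{i\xi}yc^{-i\xi}$ against a probability measure $\nu$, taking partial isometries on supports where $c$ is not invertible. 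That is,
\[
T_\phi(y)=\int c^{i\xi}\,y\,c^{-i\xi}\,d\nu(\xi).
\]
Each map in the average is contractive on $\Etau$ because the generalized singular numbers are unitarily invariant.

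\emph{Passing through the integral.} The problem is that a Bochner average of norm-contractions is not immediately contractive on a general $\Etau$, which need not be separable. Here I would use full symmetry. Each $c^{i\xi}yc^{-i\xi}$ has singular value function $\mu(y)$, so the average is submajorized by $\mu(y)$: for $y\in(L_1+L_\infty)(\tau)$,
\[
T_\phi(y)\precprec y,
\]
which follows from the Ky Fan type norms being convex and unitarily invariant. Full symmetry of $E$ then gives $\norm{T_\phi(y)}_{\Etau}\le\norm{y}_{\Etau}$.

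\emph{Main obstacle.} The hard step is justifying the identity $c^twc^{1-t}+c^{1-t}wc^t=T_\phi(cw+wc)$ for unbounded $c$ and $\tau$-measurable $w$. This requires care with the kernel of $c$, where $\lambda+\mu=0$, and with domains. I would first treat spectral truncations $c_n=c\,e[1/n,n]$, where everything is bounded and the identity holds. I would then pass to the limit using convergence in measure together with the Fatou property of $\Etau$. On the kernel the left side vanishes whenever the right side does, so those terms contribute nothing.
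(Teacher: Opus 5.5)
The paper gives no proof of this statement; it is quoted from Dodds--Dodds--Sukochev--Zanin, so your argument has to stand on its own.

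Most of it is sound. The reduction to $c=a\oplus b$ is correct, as is the formula $\phi=\cosh((2t-1)(s-r))/\cosh(s-r)$. The positive definiteness of $\cosh(\alpha x)/\cosh x$ for $\abs{\alpha}\le 1$ holds, and so does the description of $T_\phi$ as a $\nu$-average of $y\mapsto c^{i\xi}yc^{-i\xi}$, with $c^{i\xi}$ the partial isometry supported on $s(c)$.

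The gap is in the approximation step you flag as the main obstacle. Write $p_n=\one_{[1/n,n]}(c)$, $c_n=cp_n$ and $L=c^twc^{1-t}+c^{1-t}wc^t$; all norms are those of $E(\tau\otimes\mathrm{tr})$.

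If you truncate $c$ but keep $w$, the bounded case only yields $\norm{p_nLp_n}\le\norm{c_nw+wc_n}$. But $c_nw+wc_n=p_n(cw)+(wc)p_n$ is not a compression of $cw+wc$, and its norm can be strictly larger. For example, in $\mathbb M_2$ with the operator norm, take $c=\diag(1,\delta)$ with $0<\delta<1/n\le 1$, and $w$ with $cw+wc=\begin{pmatrix}1&1\\1&-1\end{pmatrix}$, which has norm $\sqrt2$. Then $c_n=\diag(1,0)$ and $c_nw+wc_n=\begin{pmatrix}1&(1+\delta)^{-1}\\(1+\delta)^{-1}&0\end{pmatrix}$, whose norm tends to $(1+\sqrt5)/2>\sqrt2$ as $\delta\to0$. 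The Fatou property only gives lower semicontinuity, so it cannot bring the right-hand side back down to $\norm{cw+wc}$.

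The repair is to compress $w$ as well. With $w_n:=p_nwp_n$ you get $c_nw_n+w_nc_n=p_n(cw+wc)p_n$, whose norm is at most $\norm{cw+wc}$. For $t\in(0,1)$ you also get $c_n^tw_nc_n^{1-t}+c_n^{1-t}w_nc_n^t=p_nLp_n$. For $t\in\{0,1\}$ one has $L=s(c)(cw+wc)s(c)$, so there is nothing to prove.

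The truncated problem then needs no extra machinery:
\begin{itemize}
\item By the separate hypothesis $az\in\Etau$, $w_n=(c^{-1}p_n)\,p_n(cw)p_n$ lies in $E(\tau\otimes\mathrm{tr})$.
\item Since $\log c_n$ is bounded on $p_n$, the map $\xi\mapsto c_n^{i\xi}yc_n^{-i\xi}$ is norm-continuous into $E(\tau\otimes\mathrm{tr})$. So $T_\phi$ is a genuine Bochner integral, and its contractivity is just the triangle inequality.
\item The identity $T_\phi(c_nw_n+w_nc_n)=p_nLp_n$ can be checked on finite-spectrum approximants of $c_n$.
\end{itemize}
This makes your Ky Fan argument unnecessary. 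As stated it would not suffice anyway: convexity controls finite averages, but passing to the $\nu$-integral needs a lower semicontinuity or duality argument.

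Finally, you need $p_nLp_n\to L$ in measure. This is not automatic when $\tau(\one_{(0,1/n)}(c))=\infty$, but it holds here. On the small spectrum, $\norm{\one_{(0,1/n)}(c)\,c^{r}}_\infty\le n^{-r}$ for $r\in\{t,1-t\}$, and both exponents are positive. On the large spectrum, $\tau(\one_{(n,\infty)}(c))\to0$. Lemma~\ref{lem:fatou-lsc} then gives $\norm{L}\le\norm{cw+wc}$.
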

	
	In particular, if $a,b\in \Etau$, then taking $z=\one$ in Theorem~\ref{thm:heinz-Etau} yields the usual Heinz inequality in $\Etau$:
	\[
	\norm{a^tb^{1-t}+a^{1-t}b^t}_{\Etau}\le \norm{a+b}_{\Etau}, \qquad t\in [0,1].
	\]
	
	In this paper, we focus on Bourin's crossed Heinz expression for $\tau$-measurable operators in fully symmetric spaces.
	Liu, He and Zhao \cite{LHZ26}  proved the following bound for $t\in[0,1/4]\cup[3/4,1]$.
	\begin{thm}\label{thm:Liu_He_Zhao}
		Let $a,b\ge 0$ in $\Etau$ and $t\in[0,1/4]\cup [3/4,1]$. Then
		\begin{equation*}
			\norm{a^t b^{1-t}+b^t a^{1-t}}_{\Etau}
			\le
			2^{2\abs{t-1/2}-1/2}\;\norm{a+b}_{\Etau}.
		\end{equation*}
	\end{thm}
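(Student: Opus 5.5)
By symmetry in $t\leftrightarrow 1-t$ (swapping $a$ and $b$ and taking adjoints, which leaves $\|\cdot\|_{\Etau}$ unchanged), it suffices to treat $t\in[3/4,1]$. The plan is to interpolate between the endpoint $t=1$, where the constant is $2^{1/2}$, and the point $t=3/4$, where the constant is $1$. The exponent $2|t-1/2|-1/2$ is affine on $[3/4,1]$, and this is exactly what a log-convexity (three-lines) argument produces.

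\textbf{Step 1 (endpoint $t=1$).} We need $\|a+b\|_{\Etau}\le\sqrt2\,\|a+b\|_{\Etau}$, which is trivial; note that $b_1(a,b)=a+b$. For the interpolation we use a factorized form instead. Write $c=(a+b)^{1/2}$. Since $a\le c^2$ and $b\le c^2$, there are contractions $k,l\in\M$ with $a^{1/2}=k c$ and $b^{1/2}=l c$, and moreover $k^*k+l^*l\le s(c)$ (the support projection of $c$), because $c(k^*k+l^*l)c=a+b=c^2$. We then express
\[
a^tb^{1-t}+b^ta^{1-t}=a^{t-1/2}\,a^{1/2}b^{1/2}\,b^{1/2-t}+b^{t-1/2}\,b^{1/2}a^{1/2}\,a^{1/2-t},
\]
and we aim for a representation of the form $X^*(\text{unitary-type powers})Y$ with $\|X\|_{2E},\|Y\|_{2E}$ controlled by $\|c\|_{2E}$. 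Here $2E$ denotes the $2$-convexification, for which $\|c\|^2_{2E}=\|a+b\|_E$.

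\textbf{Step 2 (anchor $t=3/4$).} This is the main obstacle. We take the Kittaneh--Ricard argument at $t=3/4$ (and more generally at $t\in[1/4,3/4]$) and transfer it. Their proof reduces $b_{3/4}$ to a sum of products $x^*y+y^*x$ with $x^*x+y^*y\le a+b$-type majorizations. The $\tau$-measurable version should follow from Theorem~\ref{thm:heinz-Etau} applied with a suitable $z$, together with the fact that the generalized singular numbers satisfy $\mu(x^*y+y^*x)\prec\prec\mu(x^*x+y^*y)$. The latter is a Hardy--Littlewood submajorization, and full symmetry of $\Etau$ turns submajorization into a norm inequality. Concretely, I would try setting $x=a^{1/4}b^{1/2}\cdots$, that is, choosing $x,y$ so that $x^*y+y^*x=b_{3/4}$ and $x^*x+y^*y$ is a Heinz mean bounded by Theorem~\ref{thm:heinz-Etau}. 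If this direct choice fails, the fallback is the block-matrix trick in $M_2(\M)$ with the operator $\begin{pmatrix}a&0\\0&b\end{pmatrix}$, where the crossed term appears as an off-diagonal part and Heinz applies to the diagonal operator.

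\textbf{Step 3 (interpolation).} For $z$ in the strip $0\le\operatorname{Re}z\le1$, let $F(z)=a^{s(z)}b^{1-s(z)}+b^{s(z)}a^{1-s(z)}$ with $s(z)=3/4+z/4$, suitably regularized to handle the supports and unboundedness of $a,b$. Pair $F(z)$ against $w$ in the unit ball of the Köthe dual $E^\times(\tau)$ through $\tau(F(z)w)$. On the boundary lines, the imaginary powers $a^{iy},b^{iy}$ are partial isometries. Since full symmetry and the Fatou property give $\|u x v\|\le\|x\|$ and $\|x\|_{\Etau}=\sup_w|\tau(xw)|$, the boundary values are bounded by $\|a+b\|$ on $\operatorname{Re}z=0$, using the Step~2 argument with unitaries inserted, and by $\sqrt2\|a+b\|$ on $\operatorname{Re}z=1$. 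The unitaries can be absorbed because each step of Step~2 should tolerate inserted unitaries, in the style of the $X$-inserted Heinz inequality. Hadamard's three-lines theorem then gives $|\tau(F(\theta)w)|\le 2^{\theta/2}\|a+b\|$, and $2^{\theta/2}=2^{2t-3/2}$ when $t=3/4+\theta/4$. Taking the supremum over $w$ yields the claim. The technical points are analyticity and boundedness of $\tau(F(z)w)$. I would handle these by first proving the result for $a,b\ge\varepsilon$ restricted to spectral projections of finite trace, and then passing to the limit using the Fatou property.
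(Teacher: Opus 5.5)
Your overall architecture, which applies the three-lines lemma directly to $z\mapsto a^{s(z)}b^{1-s(z)}+b^{s(z)}a^{1-s(z)}$ between $t=3/4$ and $t=1$, is sound. It would give exactly $2^{2t-3/2}$, but only once both boundary estimates are proved uniformly in the imaginary part, and neither is.

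The decisive gap is Step~2, which carries the entire content of the constant $1$ at $t=3/4$. Your first mechanism cannot work: $x^*y+y^*x$ is always self-adjoint, whereas $b_{3/4}^*=b_{1/4}\neq b_{3/4}$ in general. The fallback with $T=\diag(a,b)$ in Theorem~\ref{thm:heinz-Etau} only produces uncrossed means. The $(1,2)$ entry of $T^tXT^{1-t}+T^{1-t}XT^t$ is $a^tX_{12}b^{1-t}+a^{1-t}X_{12}b^t$, and the crossed terms $a^tb^{1-t}$ and $b^ta^{1-t}$ never land in the same entry. This is precisely why Bourin's question is not a corollary of the Heinz inequality.

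What is missing is an asymmetric row--column (Cauchy--Schwarz) factorization. For $s\in\mathbb R$,
\[
a^{3/4+is}b^{1/4-is}+b^{3/4+is}a^{1/4-is}
=\begin{pmatrix} a^{1/2} & b^{1/2}\end{pmatrix}
\begin{pmatrix} a^{1/4+is}b^{1/4-is}\\ b^{1/4+is}a^{1/4-is}\end{pmatrix}.
\]
The Douglas-plus-H\"older argument of Proposition~\ref{prop:bridge} then bounds this by $\norm{a+b}_{\Etau}^{1/2}\norm{G_s}_{\Etau}^{1/2}$, where $G_s=b^{is}(b^{1/4}a^{1/2}b^{1/4})b^{-is}+a^{is}(a^{1/4}b^{1/2}a^{1/4})a^{-is}$. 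The ALT submajorization (Lemma~\ref{lem:alt-submaj}, $r=2$) together with the AM--GM inequality (Lemma~\ref{lem:AMGM}) gives $\norm{G_s}_{\Etau}\le 2\norm{a^{1/2}b^{1/2}}_{\Etau}\le\norm{a+b}_{\Etau}$. These two inequalities, absent from your plan, are where the constant $1$ comes from.

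The line $\operatorname{Re}z=1$ has a similar problem. The triangle inequality only gives $\norm{a^{1+is}b^{-is}+b^{1+is}a^{-is}}_{\Etau}\le 2\norm{a+b}_{\Etau}$. With $2$ in place of $\sqrt2$, the three-lines lemma yields only $2^{4t-3}$, which is weaker than $2^{2t-3/2}$ on $(3/4,1]$. Your Step~1 gestures at a factorization but never produces $\sqrt2$. What works is again the row--column split $a^{1/2}\cdot a^{1/2+is}b^{-is}+b^{1/2}\cdot b^{1/2+is}a^{-is}$. The row has $XX^*=a+b$, and the column has square modulus $b^{is}ab^{-is}+a^{is}ba^{-is}$, of norm at most $2\norm{a+b}_{\Etau}$. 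Also, under the convention $u^0=s(u)$ one has $b_1(a,b)=as(b)+bs(a)$, not $a+b$, so even the real endpoint is not trivial.

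With these boundary estimates supplied (they do tolerate the imaginary powers), your route becomes a genuine alternative to the paper's. The paper interpolates the auxiliary expression $f_t$ (Theorem~\ref{thm:f-bound}, with bounds $\norm{a+b}_{\Etau}$ on $\operatorname{Re}z=3/4$ and $2\norm{a+b}_{\Etau}$ on $\operatorname{Re}z=1$). It then applies the Cauchy--Schwarz bridge of Proposition~\ref{prop:bridge} once, at real $t$. You would instead apply the bridge on each boundary line and interpolate $b_t$ itself.
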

	By using a complex interpolation argument based on the three-lines theorem on a strip, we prove that the optimal constant $1$ holds on the full central range $t\in[1/4,3/4]$ and obtain an explicit bound valid for all $t\in[0,1]$, which generalizes  Kittaneh--Ricard's result~\eqref{eq:KR-bound} and Theorem~\ref{thm:Liu_He_Zhao}.
	\begin{thm}[Bourin-type bound in $\Etau$]\label{thm:main}
		Let $a,b\ge 0$ in $\Etau$ and $t\in[0,1]$. Then
		\begin{equation}\label{eq:main}
			\norm{a^t b^{1-t}+b^t a^{1-t}}_{\Etau}
			\le
			2^{\max\{\,2\abs{t-1/2}-1/2,\;0\,\}}\;\norm{a+b}_{\Etau}.
		\end{equation}
		In particular, if $t\in[1/4,3/4]$, then
		\[
		\norm{a^t b^{1-t}+b^t a^{1-t}}_{\Etau}\le \norm{a+b}_{\Etau}.
		\]
	\end{thm}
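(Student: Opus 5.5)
We prove \eqref{eq:main} by complex interpolation. Consider the analytic family
\[
F(z)=a^{z}b^{1-z}+b^{z}a^{1-z},\qquad 0\le \operatorname{Re} z\le 1 .
\]
We will bound $\norm{F(z)}_{\Etau}$ on the three vertical lines $\operatorname{Re}z=0$, $1/4$, $3/4$, and the line $\operatorname{Re} z=1$ follows by the symmetry $a\leftrightarrow b$, $t\leftrightarrow 1-t$. The boundary bounds are:
\begin{itemize}
\item $\sqrt2\,\norm{a+b}_{\Etau}$ on $\operatorname{Re}z\in\{0,1\}$;
\item $\norm{a+b}_{\Etau}$ on $\operatorname{Re}z\in\{1/4,3/4\}$.
\end{itemize}
The three-lines theorem on the strip $[1/4,3/4]$ then gives the constant $1$ there. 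On $[0,1/4]$ it gives $2^{\frac12(1-4t)}=2^{1/2-2t}=2^{2|t-1/2|-1/2}$, and $[3/4,1]$ is symmetric. This is exactly \eqref{eq:main}.

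\emph{Technical set-up.} We first reduce to $a,b$ bounded with $a,b\ge\varepsilon$ on a finite-trace reduced algebra, or we simply replace $a,b$ by $a+\varepsilon$, $b+\varepsilon$ when $\tau$ is finite. This makes $a^{iy},b^{iy}$ unitaries and $z\mapsto \tau(F(z)w)$ bounded and analytic. The general case follows by removing $\varepsilon$ via the Fatou property and normality of $\tau$.

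For the three-lines step in $\Etau$ we use the Fatou property, which gives $\norm{x}_{\Etau}=\sup\{|\tau(xw)|:\norm{w}_{E^\times(\tau)}\le1\}$. For fixed $w$ in a norming dense set, $\phi(z)=\tau(F(z)w)$ is bounded analytic on the strip. The classical three-lines theorem for $\phi$, followed by a supremum over $w$, gives the log-convex bound. For the block-matrix Cauchy--Schwarz steps we work in $M_2(\M)$ with trace $\tau\otimes\mathrm{tr}$, and use the standard facts $\norm{xy^*}_{\Etau}\le\norm{xx^*}^{1/2}_{\Etau}\norm{yy^*}^{1/2}_{\Etau}$ (from $\mu(xy^*)\precprec \mu(xx^*)^{1/2}\mu(yy^*)^{1/2}$ and full symmetry) and $\norm{c^*wc}_{\Etau}\le\norm{c^*c}_{\Etau}$ for contractions $w$.

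\emph{Line $\operatorname{Re}z=0$.} Write
\[
F(iy)=a^{iy}b^{1/2}\cdot b^{1/2-iy}+b^{iy}a^{1/2}\cdot a^{1/2-iy}=XY^*,
\]
with $X=(a^{iy}b^{1/2},\,b^{iy}a^{1/2})$ and $Y=(b^{1/2+iy},\,a^{1/2+iy})$. Then $YY^*=a+b$ and
\[
XX^*=a^{iy}ba^{-iy}+b^{iy}ab^{-iy},
\]
whose norm is at most $\norm{a}+\norm{b}\le 2\norm{a+b}$, since $0\le a,b\le a+b$. Cauchy--Schwarz gives $\norm{F(iy)}_{\Etau}\le\sqrt2\,\norm{a+b}_{\Etau}$.

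\emph{Line $\operatorname{Re}z=1/4$ (the key step).} Put $z=1/4+iy$ and split $F(z)=XY^*$ with
\[
X=\bigl(a^{z}b^{1/4-iy},\; b^{z}a^{1/4-iy}\bigr),\qquad Y^*=\bigl(b^{1/2};\,a^{1/2}\bigr).
\]
Here $YY^*=a+b$. Up to conjugation by unitaries,
\[
XX^*\;\simeq\; a^{1/4}b^{1/2}a^{1/4}+b^{1/4}a^{1/2}b^{1/4}.
\]
So it suffices to prove the auxiliary inequality
\[
\norm{a^{1/4}b^{1/2}a^{1/4}+b^{1/4}a^{1/2}b^{1/4}}_{\Etau}\le\norm{a+b}_{\Etau}.
\]
I prove this by a second three-lines argument applied to
\[
G(z)=a^{z}b^{1-2z}a^{z}+b^{z}a^{1-2z}b^{z},\qquad 0\le\operatorname{Re}z\le 1/2,
\]
evaluated at $z=1/4$.
\begin{itemize}
\item At $\operatorname{Re}z=0$: $G=a^{iy}b^{1-2iy}a^{iy}+b^{iy}a^{1-2iy}b^{iy}$. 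Each summand is a product of unitaries with $b$ or $a$; I plan to absorb them using the same column trick as at $\operatorname{Re}z=1/2$, aiming for the bound $\norm{a+b}_{\Etau}$.
\item At $\operatorname{Re}z=1/2$: $G=a^{1/2}Wa^{1/2}+b^{1/2}Va^{\,}\!^{0}b^{1/2}$ with contractions $W,V$. Writing this as $c^*\diag(W,V)c$ with $c=(a^{1/2};b^{1/2})$ and $c^*c=a+b$ gives the bound $\norm{a+b}_{\Etau}$.
\end{itemize}
The line $\operatorname{Re}z=3/4$ for $F$ follows from $\operatorname{Re}z=1/4$ by swapping $a$ and $b$.

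\emph{Main obstacle.} I expect the hardest part to be the auxiliary estimate at $\operatorname{Re}z=1/4$, especially making the left boundary line $\operatorname{Re}z=0$ of $G$ rigorous: the imaginary powers there do not factor as cleanly as at $\operatorname{Re}z=1/2$. A second difficulty is justifying the analyticity and boundedness needed for three lines with unbounded $\tau$-measurable operators. I would handle that by the $\varepsilon$-regularization and cut-down by spectral projections of finite trace, passing to the limit using the Fatou property.
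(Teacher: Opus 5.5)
Your outer scheme works and differs from the paper's. The paper interpolates the auxiliary $f_t$ over $\Re z\in\{\tfrac12,\tfrac34,1\}$ and then uses the bridge $\norm{b_t}_{\Etau}\le\norm{a+b}_{\Etau}^{1/2}\norm{f_t}_{\Etau}^{1/2}$. You instead factor $F=XY^*$ with $YY^*=a+b$ and apply Cauchy--Schwarz, which gives $\sqrt2\,\norm{a+b}_{\Etau}$ on $\Re z=0$. The identity $F(1-\bar z)=F(z)^*$ then transfers the lines $\Re z=0,\tfrac14$ to $\Re z=1,\tfrac34$. Swapping $a$ and $b$ alone does nothing, since $F$ is symmetric in $a,b$.

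The genuine gap is the key line $\Re z=\tfrac14$, for two reasons.

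\emph{The reduction to $P+Q$ is invalid.} With $P=a^{1/4}b^{1/2}a^{1/4}$ and $Q=b^{1/4}a^{1/2}b^{1/4}$ one has $XX^*=a^{iy}Pa^{-iy}+b^{iy}Qb^{-iy}$. The two summands are conjugated by \emph{different} unitaries, so $XX^*$ is not unitarily equivalent to $P+Q$. A bound on $\norm{P+Q}_{\Etau}$ alone would therefore not suffice.

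\emph{The second three-lines argument for $G$ fails.} The bound $\norm{G(iy)}_{\Etau}\le\norm{a+b}_{\Etau}$ you aim for on $\Re z=0$ is false. Take $\M=\mathbb M_2$ and $E=L^\infty$ (the operator norm). Let $a=\diag(1,\varepsilon)$, $b=RaR^*$ with $R$ the rotation by $\pi/3$, and $y=\pi/\log\varepsilon$.
\begin{enumerate}
\item Then $a^{iy}=S_a$ and $b^{iy}=S_b$ are the reflections fixing the top eigenvectors of $a$ and $b$.
\item Also $a^{-2iy}=b^{-2iy}=\one$, so $G(iy)=S_abS_a+S_baS_b$.
\item Both summands have top eigenvector on the line at angle $-\pi/3\equiv2\pi/3$, so $\norm{G(iy)}_\infty=2$.
\item Meanwhile $\norm{a+b}_\infty=\tfrac32+\tfrac\varepsilon2$.
\end{enumerate}
On $\Re z=0$ only $\norm{a}_{\Etau}+\norm{b}_{\Etau}\le2\norm{a+b}_{\Etau}$ is available. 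Interpolating against $\Re z=\tfrac12$ then gives only $\sqrt2\,\norm{a+b}_{\Etau}$ at $z=\tfrac14$, and the constant $1$ is lost on the whole central range.

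The missing idea is to bound the two summands of $XX^*$ separately, without interpolation. By the triangle inequality and unitary invariance, $\norm{XX^*}_{\Etau}\le\norm{P}_{\Etau}+\norm{Q}_{\Etau}$. By the ALT-type submajorization with $r=2$, $P=\abs{b^{1/4}a^{1/4}}^2\precprec b^{1/2}a^{1/2}$ and $Q=\abs{a^{1/4}b^{1/4}}^2\precprec a^{1/2}b^{1/2}$. Hence $\norm{XX^*}_{\Etau}\le2\norm{a^{1/2}b^{1/2}}_{\Etau}\le\norm{a+b}_{\Etau}$ by the arithmetic--geometric mean inequality. This is exactly the paper's Step~3, and it absorbs the differing unitary conjugations.

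With this repair your argument closes and is somewhat leaner than the paper's. You interpolate the Bourin expression itself, so you need neither the Douglas factorization and Heinz step on $\Re z=\tfrac12$ nor the bridge proposition. The paper's route, in exchange, also yields the estimate for $f_t$.

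One further detail concerns the endpoints. Replacing $a$ by $a+\varepsilon$ turns $a^0$ into the unit rather than $s(a)$, so your limit does not reach $t\in\{0,1\}$. Check these directly with $X=(s(a)b^{1/2},\,s(b)a^{1/2})$ and $Y=(b^{1/2},\,a^{1/2})$, which gives $\norm{s(a)b+s(b)a}_{\Etau}\le\sqrt2\,\norm{a+b}_{\Etau}$.
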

	
	\medskip
	
	\noindent\textbf{Organization of this paper.}
	Section~\ref{sec:prelim} recalls the notation for $\tau$-measurable operators and fully symmetric spaces $\Etau$.
	It also collects several tools used later, including a H\"older-type inequality, the three-lines lemma on a strip,
	an arithmetic--geometric mean inequality, an Araki--Lieb--Thirring-type (ALT-type) submajorization statement, and a Douglas-type factorization lemma.
	Section~\ref{sec:bourin} establishes a complex interpolation estimate for an auxiliary expression $f_t$,
	then proves a bridge inequality via a $2\times2$ block-positivity argument, and finally combines these ingredients to prove
	Theorem~\ref{thm:main}.
	
	\section{Preliminaries}\label{sec:prelim}
	
	\subsection{$\tau$-measurable operators and generalized singular numbers}
	
	We follow \cite[\S2]{DDSZ20}.
	Let $\M\subset B(\mathcal H)$ be a semifinite von Neumann algebra with faithful normal semifinite trace $\tau$.
	The identity in $\M$ is denoted by $\one$, and $P(\M)$ denotes the lattice of projections in $\M$.
	
	A closed densely defined operator $x$ on $\mathcal H$ is \emph{affiliated} with $\M$ if it commutes with the commutant $\M'$.
	A closed densely defined operator $x$ affiliated with $\M$ is called \emph{$\tau$-measurable} if there exists $s\ge 0$ such that
	$\tau(e^{|x|}(s,\infty))<\infty$, where $e^{|x|}$ is the spectral measure of $|x|$.
	The collection of all $\tau$-measurable operators is denoted by $\Stau$; with sums/products taken as closures,
	$\Stau$ is a $*$-algebra.
	
	For $x\in\Stau$, define the distribution function
	\[
	\dsf{s}{|x|}:=\tau\bigl(e^{|x|}(s,\infty)\bigr),\qquad s\ge 0,
	\]
	and the generalized singular value function
	\[
	\mut{t}{x}:=\inf\{\,s\ge 0:\ \dsf{s}{|x|}\le t\,\},\qquad t\ge 0.
	\]
	We write $\mu(x)=\mu(\cdot;\,x)=\mu(\cdot;\,|x|)$.
	
	We use the standard properties (see \cite{DDSZ20,FK86}):
	\begin{itemize}[leftmargin=2.2em]
		\item $\mu(uxv)\le \|u\|_\infty\,\|v\|_\infty\,\mu(x)$ for all $u,v\in \M$ and $x\in\Stau$;
		\item if $0\le x\in\Stau$ and $f$ is increasing continuous on $[0,\infty)$ with $f(0)\ge 0$, then $\mu(f(x))=f(\mu(x))$.
	\end{itemize}
	
	For $0\le x\in\Stau$, the support projection is
	\[
	s(x):=\one-e^{x}(\{0\})=\one_{(0,\infty)}(x)\in P(\M).
	\]
	We adopt the convention $x^0:=s(x)$.
	
	\subsection{Fully symmetric spaces $\Etau$}
	
	Let $E$ be a fully symmetric Banach function space on $[0,\infty)$ (Lebesgue measure).
	The associated fully symmetric space $\Etau$ is defined by
	\[
	\Etau:=\{x\in\Stau:\ \mu(x)\in E\},\qquad
	\|x\|_{\Etau}:=\|\mu(x)\|_{E}.
	\]
	Then $\Etau$ is an $\M$-bimodule: for $a,b\in \M$ and $x\in\Etau$,
	\begin{equation}\label{eq:bimodule}
		\norm{axb}_{\Etau}\le \norm{a}_{\infty}\,\norm{x}_{\Etau}\,\norm{b}_{\infty}.
	\end{equation}
	In particular, if $a,b\in\M$, then
	\begin{equation}\label{eq:mu-axb}
		\mut{t}{axb}\le \norm{a}_{\infty}\,\norm{b}_{\infty}\,\mut{t}{x}\qquad (t>0).
	\end{equation}
	
	We also use Hardy--Littlewood submajorization in the sense of \cite[\S2]{DDSZ20}:
	for $x\in S(\tau)$ and $y\in S(\sigma)$ (possibly in different algebras),
	\[
	x\precprec y \quad \Longleftrightarrow\quad \int_0^t \mut{s}{x}\,ds\le \int_0^t \mut{s}{y}\,ds\ \ \text{for all }t\ge 0.
	\]
	Since $\Etau$ is fully symmetric, $0\le u\precprec v$ and $v\in\Etau$ imply $u\in\Etau$ and $\|u\|_{\Etau}\le \|v\|_{\Etau}$.
	
	\begin{lem}[Unitary invariance]\label{lem:unitary-inv}
		If $u,v\in\M$ are unitaries and $x\in\Etau$, then $\mu(uxv)=\mu(x)$ and hence $\|uxv\|_{\Etau}=\|x\|_{\Etau}$.
	\end{lem}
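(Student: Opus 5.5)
The plan is to reduce the statement to the bound \eqref{eq:mu-axb} for generalized singular numbers, and then read off the equality from the definitions of $\mu$ and of $\Etau$.

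\textbf{Upper bound.} Since $u,v\in\M$ are unitaries, $\norm{u}_\infty=\norm{v}_\infty=1$. Applying \eqref{eq:mu-axb} with $a=u$ and $b=v$ gives
\[
\mut{t}{uxv}\le \mut{t}{x}\qquad (t>0).
\]

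\textbf{Reverse bound.} Write $x=u^*(uxv)v^*$. The operators $u^*,v^*$ are again unitaries in $\M$, so applying \eqref{eq:mu-axb} to $uxv$ gives $\mut{t}{x}\le \mut{t}{uxv}$ for $t>0$. Together with the upper bound, $\mu(uxv)=\mu(x)$ on $(0,\infty)$.

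\textbf{The point $t=0$.} Since $E$ consists of equivalence classes modulo Lebesgue-null sets, the value at $t=0$ is irrelevant for membership in $E$ and for the norm. If one wants pointwise equality there as well, one can argue directly. We have $|uxv|^2=v^*x^*xv$, so $|uxv|=v^*|x|v$ by uniqueness of the positive square root. Hence the spectral projections satisfy $e^{|uxv|}(s,\infty)=v^*e^{|x|}(s,\infty)v$. Since $\tau$ is unitarily invariant on projections ($\tau(w^*pw)=\tau(pww^*)=\tau(p)$ for unitary $w$, by the trace property), the distribution functions of $|uxv|$ and $|x|$ coincide. By definition of $\mu$, this gives $\mu(uxv)=\mu(x)$ everywhere. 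This direct argument could in fact replace the first two steps entirely.

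\textbf{Conclusion.} Because $x\in\Etau$, we have $\mu(uxv)=\mu(x)\in E$, so $uxv\in\Etau$. Then
\[
\norm{uxv}_{\Etau}=\norm{\mu(uxv)}_E=\norm{\mu(x)}_E=\norm{x}_{\Etau}.
\]

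The only step with any subtlety is justifying $|uxv|=v^*|x|v$ for unbounded $\tau$-measurable $x$. This follows from the strong-product conventions in $\Stau$ and the fact that unitary conjugation preserves positivity and closures. Alternatively, it can be bypassed entirely by using the two-sided estimate from \eqref{eq:mu-axb} for $t>0$.
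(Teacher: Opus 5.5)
Your proof is correct and is essentially the argument the paper has in mind. The paper states this lemma without proof, right after recording $\mu(uxv)\le\norm{u}_\infty\norm{v}_\infty\,\mu(x)$ for $u,v\in\M$ and $x\in\Stau$; applying that estimate once to $(u,v)$ and once to $(u^*,v^*)$ acting on $uxv$, exactly as you do, gives the equality. Your alternative direct route via $|uxv|=v^*|x|v$ and the unitary invariance of $\tau$ on projections is also valid.
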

	
	\subsection{Spectral truncations}
	
	For $0\le x\in\Stau$ and $n>0$ we define the usual truncation
	\[
	x\wedge n := \min\{x,n\one\}=x\,\one_{[0,n]}(x)+n\,\one_{(n,\infty)}(x),
	\]
	defined by Borel functional calculus. Then $x\wedge n\ge 0$ and $x\wedge n\to x$ in measure as $n\to\infty$.
	
	\begin{lem}[Truncations commute with positive powers]\label{lem:trunc-powers}
		Let $0\le x\in \Stau$ and $n>0$. Then for every $\alpha>0$,
		\[
		(x\wedge n)^{\alpha}=x^{\alpha}\wedge n^{\alpha}.
		\]
		In particular, $(x\wedge n)^{\alpha}\to x^{\alpha}$ in measure as $n\to\infty$.
	\end{lem}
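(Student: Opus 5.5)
The statement to be proved is Lemma~\ref{lem:trunc-powers}. The plan is to use the Borel functional calculus for the single positive operator $x$, and then the monotone-function property of generalized singular numbers for the convergence.

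\textbf{The identity.} Consider the two scalar Borel functions on $[0,\infty)$
\[
g(s)=(\min\{s,n\})^{\alpha},\qquad h(s)=\min\{s^{\alpha},n^{\alpha}\}.
\]
Since $s\mapsto s^{\alpha}$ is strictly increasing on $[0,\infty)$ for $\alpha>0$, we have $g=h$ pointwise. Indeed, for $s\le n$ both equal $s^{\alpha}$, and for $s>n$ both equal $n^{\alpha}$.

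Both sides of the claimed identity arise from $x$ by functional calculus. We have $x\wedge n=\phi(x)$ with $\phi(s)=\min\{s,n\}$. By the composition rule for the Borel functional calculus, $(x\wedge n)^{\alpha}=(s^{\alpha}\circ\phi)(x)=g(x)$. On the other side, $x^{\alpha}\wedge n^{\alpha}=h(x)$: the spectral measure of $x^{\alpha}$ is the image of $e^{x}$ under $s\mapsto s^{\alpha}$, so truncating $x^{\alpha}$ at $n^{\alpha}$ is again composition. Since $g=h$, the two operators coincide. Here $g$ and $h$ are bounded, so $g(x),h(x)\in\M$ and no domain issues arise.

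\textbf{The convergence.} Set $y=x^{\alpha}$. Then $y\ge0$ and $y\in\Stau$, since $\mu(x^{\alpha})=\mu(x)^{\alpha}$ by the monotone-function property and this is finite-valued. The convergence claim therefore reduces to the basic fact $y\wedge m\to y$ in measure as $m=n^{\alpha}\to\infty$, which was recorded just before the lemma.

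If one wants this fact directly, write $y-y\wedge m=(s-m)_+(y)$. Its spectral projection on $(\varepsilon,\infty)$ is $e^{y}(m+\varepsilon,\infty)$. For large $m$ this projection has trace at most $d(m;y)$, and $d(m;y)\to0$ by $\tau$-measurability of $y$ together with normality of $\tau$ once $d$ is finite. Hence $\mu(t;\,y-y\wedge m)\to0$ for each $t>0$, which is convergence in measure.

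\textbf{Main obstacle.} The only delicate point is justifying the composition rule $f(\phi(x))=(f\circ\phi)(x)$ for unbounded affiliated $x$. I would handle it by invoking the standard spectral theorem for self-adjoint operators; the rest is elementary.
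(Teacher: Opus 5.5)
Your proof is correct and takes the paper's approach: the identity is exactly the composition rule of the Borel functional calculus for $\lambda\mapsto\min\{\lambda,n\}$ followed by $\lambda\mapsto\lambda^{\alpha}$, which is all the paper says. Your convergence argument, which applies the truncation convergence $y\wedge m\to y$ to $y=x^{\alpha}$ with $m=n^{\alpha}$ and checks it via $d(\varepsilon;\,y-y\wedge m)=d(m+\varepsilon;\,y)\to 0$, fills in the ``in particular'' clause that the paper leaves implicit.
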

	\begin{proof}
		This is immediate from scalar functional calculus applied to $\lambda\mapsto \min\{\lambda,n\}$ and then to $\lambda\mapsto \lambda^\alpha$.
	\end{proof}
	
	\subsection{Measure topology and continuity tools}
	
	Following \cite[\S2]{DDSZ20}, for $\varepsilon,\delta>0$ let $V(\varepsilon,\delta)$ be the set of all
	$x\in \Stau$ for which there exists a projection $p\in P(\M)$ such that $p(\mathcal H)\subset D(x)$,
	$\|xp\|_\infty\le \varepsilon$, and $\tau(\one-p)\le \delta$.
	Then $\{V(\varepsilon,\delta):\varepsilon,\delta>0\}$ is a base at $0$ for a metrizable Hausdorff topology on $\Stau$,
	called the \emph{measure topology}; equipped with this topology, $\Stau$ is a complete topological $*$-algebra.
	
	We will repeatedly use the standard facts (see \cite[\S2]{DDSZ20}): a sequence $x_n\to 0$ in measure iff
	$\mut{t}{x_n}\to 0$ for all $t>0$, and if $a_n,a\in\Stau$ are self-adjoint and $a_n\to a$ in measure, then
	$f(a_n)\to f(a)$ in measure for every continuous $f:\mathbb R\to\mathbb C$ (Tikhonov).
	
	\begin{lem}[Measure-topology continuity]\label{lem:measure-cont}
		Let $(x_n)$ and $(y_n)$ be sequences in $\Stau$ such that $x_n\to x$ and $y_n\to y$ in measure. Then:
		\begin{enumerate}[label=\textup{(\roman*)},leftmargin=2.2em]
			\item $x_n^*\to x^*$ in measure;
			\item $x_n y_n \to xy$ in measure (products taken as closures in $\Stau$);
			\item if in addition each $x_n$ is self-adjoint and $f:\mathbb{R}\to\mathbb{C}$ is continuous, then
			$f(x_n)\to f(x)$ in measure.
		\end{enumerate}
	\end{lem}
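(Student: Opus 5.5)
All three assertions follow from the characterization recalled just before the lemma: $z_n\to 0$ in measure if and only if $\mut{t}{z_n}\to 0$ for every $t>0$. Part~(iii) is also among the standard facts already recalled there. The plan is to reduce (i) and (ii) to elementary estimates on generalized singular numbers, and to justify (iii) by citation.

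For (i), I will use the identity $\mu(z^*)=\mu(z)$ for $z\in\Stau$, which holds because $|z^*|$ and $|z|$ are unitarily equivalent on their supports. Applying it with $z=x_n-x$ gives
\[
\mut{t}{x_n^*-x^*}=\mut{t}{(x_n-x)^*}=\mut{t}{x_n-x}\to 0 .
\]
Hence $x_n^*\to x^*$ in measure.

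For (ii), I will use the decomposition
\[
x_ny_n-xy=(x_n-x)(y_n-y)+x(y_n-y)+(x_n-x)y,
\]
together with two standard properties of generalized singular numbers (see \cite{FK86}):
\[
\mut{s+r}{uv}\le \mut{s}{u}\,\mut{r}{v},
\qquad
\mut{s_1+s_2+s_3}{u_1+u_2+u_3}\le \sum_{i=1}^{3}\mut{s_i}{u_i}.
\]
Fix $t>0$ and set $s=t/6$. The three terms are then bounded as follows.
\begin{itemize}[leftmargin=2.2em]
\item The first term satisfies $\mut{2s}{(x_n-x)(y_n-y)}\le \mut{s}{x_n-x}\,\mut{s}{y_n-y}\to 0$.
\item The second term satisfies $\mut{2s}{x(y_n-y)}\le \mut{s}{x}\,\mut{s}{y_n-y}\to 0$, since $\mut{s}{x}<\infty$ for $\tau$-measurable $x$ and $s>0$.
\item The third term is handled symmetrically, using the factorization with $\mut{s}{y}<\infty$.
\end{itemize}
Adding the three bounds gives $\mut{t}{x_ny_n-xy}\to 0$ for every $t>0$, which is convergence in measure. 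The only point needing care is that products and sums in $\Stau$ are strong products and sums, i.e.\ closures. Since $\Stau$ is a $*$-algebra under these operations, the algebraic decomposition above is legitimate.

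Part (iii) is Tikhonov's continuity theorem for the functional calculus in the measure topology. It was recalled above as a standard fact from \cite[\S2]{DDSZ20}, and I will simply invoke it. If a direct argument were wanted, the natural route is as follows.
\begin{itemize}[leftmargin=2.2em]
\item Uniformly bound spectral tails. Since $x_n\to x$ in measure, for each $\delta>0$ there are $R>0$ and $N$ such that $\tau(e^{|x_n|}(R,\infty))\le\delta$ for all $n\ge N$, and also $\tau(e^{|x|}(R,\infty))\le\delta$.
\item Approximate on a compact interval. By Weierstrass, choose a polynomial $p$ with $|f-p|\le\varepsilon$ on $[-R-1,R+1]$; polynomials are handled by repeated use of (ii).
\item Control the remainder. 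Estimate $f(x_n)-p(x_n)$ outside projections of trace at most $\delta$.
\end{itemize}
The delicate step in this direct route is the last one: controlling $f(x_n)-p(x_n)$ off a small projection, uniformly in $n$. This is precisely the content of Tikhonov's theorem, which is why I would rely on the citation rather than reprove it.
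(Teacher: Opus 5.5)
Your proof is correct, and it differs from the paper's only in how much it proves. For (iii) you do exactly what the paper does: you cite Tikhonov's theorem via \cite[\S2]{DDSZ20}. For (i) and (ii) the paper offers nothing beyond the remark that $\Stau$ is a complete topological $*$-algebra for the measure topology. You instead verify the sequential continuity directly, using three standard ingredients:
\begin{itemize}
\item the characterization ``$z_n\to0$ in measure iff $\mut{t}{z_n}\to0$ for all $t>0$'';
\item the identity $\mu(z^*)=\mu(z)$;
\item the Fack--Kosaki inequalities $\mut{s+r}{uv}\le\mut{s}{u}\,\mut{r}{v}$ and $\mut{s+r}{u+v}\le\mut{s}{u}+\mut{r}{v}$.
\end{itemize}
Your route is self-contained modulo those singular-value facts; in effect you re-derive, for sequences, the joint continuity of multiplication that the paper takes from the topological-algebra structure. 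The paper's one-line appeal is shorter. Since the measure topology is metrizable, both yield the same statement.

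There are two small points on (iii). First, you should note that the limit $x$ is automatically self-adjoint. By (i) and the Hausdorff property, $x^*=\lim x_n^*=\lim x_n=x$; without this, $f(x)$ is not defined.

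Second, the step you call delicate in the direct route is routine. The operator $(f-p)(x_n)$ commutes with $e^{x_n}([-R,R])$ and has norm at most $\varepsilon$ on its range. Hence $\dsf{\varepsilon}{|(f-p)(x_n)|}\le\tau\bigl(e^{|x_n|}(R,\infty)\bigr)\le\delta$, i.e.\ $\mut{\delta}{(f-p)(x_n)}\le\varepsilon$ for all $n\ge N$, and likewise for $x$. Write $f(x_n)-f(x)=(f-p)(x_n)-(f-p)(x)+p(x_n)-p(x)$ and use (ii) to get $p(x_n)\to p(x)$. This gives $\limsup_n\mut{3\delta}{f(x_n)-f(x)}\le 2\varepsilon$, so (iii) could have been made self-contained as well.
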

	
	Lemma~\ref{lem:measure-cont} is standard since $\Stau$ is a complete topological $*$-algebra for the measure topology; (iii) follows from Tikhonov's theorem, see \cite[\S2]{DDSZ20}.
	
	\begin{rem}\label{rem:finite-corner-in-Etau}
		If $e\in \M$ is a projection with $\tau(e)<\infty$, then $e\in \Etau$.
		Indeed, $\mu(e)=\one_{[0,\tau(e))}$, and $\one_{[0,t)}\in E$ for every $t<\infty$ in a Banach function space.
		Consequently, every bounded operator in the finite von Neumann algebra $e\M e$ belongs to $\Etau$.
	\end{rem}
	
	\begin{lem}[Fatou-type lower semicontinuity, {\cite[Proposition~6.3]{Pa08}}]\label{lem:fatou-lsc}
		Assume that $\Etau$ has the Fatou property.
		Let $(x_n)\subset \Etau$ be bounded in $\|\cdot\|_{\Etau}$ and converging to $x\in \Stau$ in measure.
		Then $x\in \Etau$ and
		\[
		\|x\|_{\Etau}\le \liminf_{n\to\infty}\|x_n\|_{\Etau}.
		\]
	\end{lem}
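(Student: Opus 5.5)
The plan is to reduce the statement to the scalar Fatou property of $E$. The bridge is a pointwise lower semicontinuity estimate for generalized singular values under convergence in measure. I would proceed in two steps.

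\emph{Step 1: pointwise lower semicontinuity of $\mu$.} I will use the standard subadditivity $\mut{t+s}{y+w}\le \mut{t}{y}+\mut{s}{w}$ for $y,w\in\Stau$ and $t,s>0$ (see \cite{FK86}). Writing $x=x_n+(x-x_n)$ gives
\[
\mut{t+s}{x}\le \mut{t}{x_n}+\mut{s}{x-x_n}.
\]
Since $x_n\to x$ in measure, $\mut{s}{x-x_n}\to 0$ for every $s>0$, by the characterization recalled before Lemma~\ref{lem:measure-cont}. Hence $\mut{t+s}{x}\le\liminf_n \mut{t}{x_n}$. Letting $s\downarrow 0$ and using right-continuity of $t\mapsto\mut{t}{x}$ then yields
\[
\mut{t}{x}\le \liminf_{n\to\infty}\mut{t}{x_n}\qquad (t>0).
\]

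\emph{Step 2: the Fatou property of $E$.} First I pass to a subsequence along which $\|x_n\|_{\Etau}\to L:=\liminf_n\|x_n\|_{\Etau}$. Convergence in measure and Step~1 persist along the subsequence. Next I set $g_k:=\inf_{n\ge k}\mu(x_n)$. This is a countable infimum of decreasing right-continuous functions, so it is measurable. The sequence $g_k$ is increasing in $k$, and by Step~1 its limit satisfies $g:=\lim_k g_k\ge \mu(x)$ pointwise. Since $0\le g_k\le\mu(x_n)$ for every $n\ge k$ and $E$ is a Banach lattice, we get $g_k\in E$ with $\|g_k\|_E\le\inf_{n\ge k}\|x_n\|_{\Etau}\le \sup_n\|x_n\|_{\Etau}<\infty$. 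The Fatou property ($0\le g_k\uparrow g$ with bounded norms implies $g\in E$ and $\|g\|_E=\sup_k\|g_k\|_E$) now gives $g\in E$ and $\|g\|_E\le\lim_k\inf_{n\ge k}\|x_n\|_{\Etau}=L$. Finally, the ideal property applied to $0\le\mu(x)\le g$ gives $\mu(x)\in E$, i.e.\ $x\in\Etau$, and $\|x\|_{\Etau}=\|\mu(x)\|_E\le L$.

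The main obstacle is Step~1, specifically justifying that $\mut{s}{x-x_n}\to0$ controls the perturbation. This uses the Ky Fan-type inequality $\mu(t+s;\cdot)$ for sums, which I would quote from \cite{FK86}. The only delicate point is the passage $s\downarrow0$, which relies on right-continuity of $\mu$. Everything else is lattice bookkeeping in $E$.
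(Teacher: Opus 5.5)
Your proof is correct. There is nothing in the paper to compare it with line by line, because the paper does not prove this lemma; it quotes it from \cite[Proposition~6.3]{Pa08}. Your argument is the standard one behind such statements, and it makes the lemma self-contained.

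\textbf{Step 1.} You get pointwise lower semicontinuity, $\mu(t;\,x)\le\liminf_n\mu(t;\,x_n)$ for $t>0$, from the Fack--Kosaki inequality $\mu(t+s;\,y+w)\le\mu(t;\,y)+\mu(s;\,w)$, the characterization of measure convergence via $\mu(s;\,x-x_n)\to0$, and right-continuity of $\mu(\cdot;\,x)$.

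\textbf{Step 2.} You apply the Fatou property of $E$ to the increasing sequence $g_k=\inf_{n\ge k}\mu(x_n)$.

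Your version also shows exactly where the hypothesis enters. You use the Fatou property of $E$ in its strong form: $0\le g_k\uparrow g$ with $\sup_k\|g_k\|_E<\infty$ forces $g\in E$ and $\|g\|_E=\sup_k\|g_k\|_E$. That is what the paper's standing assumption on $E$ provides. The lemma, however, phrases the hypothesis for $E(\tau)$, so your reduction relies on the equivalence the paper asserts in the introduction.

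Two small remarks.
\begin{enumerate}
\item The passage to a subsequence is unnecessary. The bound $\|g_k\|_E\le\inf_{n\ge k}\|x_n\|_{E(\tau)}$ already gives $\|g\|_E\le\sup_k\inf_{n\ge k}\|x_n\|_{E(\tau)}=\liminf_n\|x_n\|_{E(\tau)}$.
\item The step $g_k\in E$ uses the ideal (solidity) property of a Banach function space, not merely the fact that $E$ is a Banach lattice.
\end{enumerate}
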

	
	\begin{lem}[Vanishing at infinity when $\one\notin E$]\label{lem:vanishing-infty}
		Assume that $\tau(\one)=\infty$ and $\one\notin E$.
		Then for every $f\in E$ we have $\lim_{t\to\infty} f^*(t)=0$, where $f^*$ is the decreasing rearrangement.
		In particular, if $x\in \Etau$, then $\mut{t}{x}\to 0$ as $t\to\infty$, and equivalently
		\[
		\tau\bigl(\one_{(\delta,\infty)}(|x|)\bigr)<\infty\qquad(\delta>0).
		\]
	\end{lem}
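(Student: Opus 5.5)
The statement to be proved is Lemma~\ref{lem:vanishing-infty}. The plan is to derive it from the structure of fully symmetric spaces, using that $E$ is rearrangement invariant and that the decreasing rearrangement has a limit at infinity.

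Let $f\in E$. Since $E$ is symmetric, $f^*\in E$ and $\|f^*\|_E=\|f\|_E$. As $f^*$ is decreasing and nonnegative, the limit $c:=\lim_{t\to\infty}f^*(t)$ exists in $[0,\infty)$. Suppose for contradiction that $c>0$. Then $f^*(t)\ge c$ for all $t\ge0$, so $c\one\le f^*$ pointwise. A fully symmetric space is in particular a Banach lattice (an ideal), so $c\one\in E$, hence $\one\in E$, a contradiction. Therefore $c=0$.

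For $x\in\Etau$ we have $\mu(x)\in E$ by definition. Moreover $\mu(x)$ is already decreasing and right-continuous, so $\mu(x)^*=\mu(x)$. The first part then gives $\mut{t}{x}\to0$ as $t\to\infty$.

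It remains to prove the equivalence with finiteness of the spectral projections. Fix $\delta>0$ and choose $t_0$ with $\mut{t_0}{x}<\delta$. By the definition of $\mu$ as an infimum, there is $s<\delta$ with $\dsf{s}{|x|}\le t_0$. Since $d(\cdot;|x|)$ is decreasing,
\[
\tau\bigl(\one_{(\delta,\infty)}(|x|)\bigr)=\dsf{\delta}{|x|}\le\dsf{s}{|x|}\le t_0<\infty .
\]
Conversely, suppose $\dsf{\delta}{|x|}<\infty$ for every $\delta>0$. Then $\mut{t}{x}\le\delta$ as soon as $t\ge\dsf{\delta}{|x|}$, so $\mut{t}{x}\to0$.

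The only step needing care is the ideal property (solidity) of $E$, and it is part of the definition of a Banach function space. The hypothesis $\tau(\one)=\infty$ is used only to give context: when $\tau(\one)<\infty$ the function $\mu(x)$ vanishes beyond $\tau(\one)$ anyway. I do not expect a real obstacle here.
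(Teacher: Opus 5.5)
Your proof is correct and follows the paper's argument: if $f^*$ had a positive limit $c$ at infinity, then $c\one\le f^*$ and solidity (equivalently, full symmetry) of $E$ would force $\one\in E$; applying this to $f=\mu(x)=\mu(x)^*$ gives the operator statement. You also prove the equivalence with $\tau\bigl(\one_{(\delta,\infty)}(|x|)\bigr)<\infty$ from the definition of $\mu$ as an infimum, which the paper asserts without proof.
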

	\begin{proof}
		This is standard for Banach function spaces on $[0,\infty)$: if $f^*$ does not vanish at infinity,
		then boundedness of $\|f\|_E$ forces $\one\in E$, a contradiction. Applying this to $f=\mu(x)\in E$
		yields $\mut{t}{x}\to 0$.
	\end{proof}
	
	\begin{lem}[Sequential finite-corner approximation for $\tau$-compact positives]\label{lem:seq-cutdown}
		Assume $\tau(\one)=\infty$ and $\one\notin E$, so that $a,b\in\Etau_+$ satisfy
		$\tau(\one_{(\delta,\infty)}(a))<\infty$ and $\tau(\one_{(\delta,\infty)}(b))<\infty$ for all $\delta>0$.
		For $k\in\mathbb N$ set
		\[
		p_k:=\one_{(1/k,\infty)}(a),\qquad q_k:=\one_{(1/k,\infty)}(b),\qquad e_k:=p_k\vee q_k.
		\]
		Then $\tau(e_k)<\infty$, and
		\[
		a-e_kae_k\in\M,\quad \|a-e_kae_k\|_\infty\le \frac{2}{k},
		\qquad
		b-e_kbe_k\in\M,\quad \|b-e_kbe_k\|_\infty\le \frac{2}{k}.
		\]
		In particular, $e_kae_k\to a$ and $e_kbe_k\to b$ in measure as $k\to\infty$.
	\end{lem}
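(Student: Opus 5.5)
The statement to prove is Lemma~\ref{lem:seq-cutdown}. The plan is to reduce everything to the spectral decompositions of $a$ and $b$, together with one elementary observation about cutting down by a projection that dominates a spectral projection.

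\textbf{Finiteness of $\tau(e_k)$.} By hypothesis (equivalently, by Lemma~\ref{lem:vanishing-infty}), $\tau(p_k)<\infty$ and $\tau(q_k)<\infty$. For projections we have
\[
\tau(p\vee q)\le \tau(p)+\tau(q),
\]
since $p\vee q-q\sim p-p\wedge q\le p$ by Kaplansky's formula. Hence $\tau(e_k)<\infty$.

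\textbf{The norm estimate.} Write $a=ap_k+a(\one-p_k)$. The first piece is $ap_k=p_kap_k$, and the second satisfies
\[
0\le a(\one-p_k)=a\,\one_{[0,1/k]}(a)\le \tfrac1k\,\one-\ \text{bounded by } \tfrac1k,
\]
so $r_k:=a(\one-p_k)\in\M$ with $\|r_k\|_\infty\le 1/k$. Because $p_k\le e_k$, we get $e_kp_k=p_k=p_ke_k$, and therefore $e_k(ap_k)e_k=ap_k$. It follows that
\[
a-e_kae_k=r_k-e_kr_ke_k,
\]
which lies in $\M$ and has norm at most $\|r_k\|_\infty+\|e_kr_ke_k\|_\infty\le 2/k$. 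The same argument with $q_k\le e_k$ handles $b$. All the manipulations are legitimate in $\Stau$, because $ap_k=p_kap_k$ is bounded (it equals $a\one_{(1/k,\infty)}(a)$, and $a\in\Stau$ is closed). If care is needed, $p_kap_k$ is simply a $\tau$-measurable operator with finite-trace support, and the algebraic identities hold in the $*$-algebra $\Stau$ with closures. I expect this step, keeping the unbounded part confined to $e_k\M e_k$, to be the only point that needs care. It is resolved because $e_k$ commutes with nothing in particular but absorbs $p_k$ from both sides.

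\textbf{Convergence in measure.} Since $\|a-e_kae_k\|_\infty\le 2/k\to0$, we have $\mu(t;a-e_kae_k)\le 2/k$ for every $t>0$. By the characterization of convergence in measure recalled before Lemma~\ref{lem:measure-cont}, $e_kae_k\to a$ in measure, and likewise $e_kbe_k\to b$.
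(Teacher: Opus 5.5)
Your proposal is correct and follows the paper's proof essentially step for step. You split off $r_k=a(\one-p_k)$ with $\|r_k\|_\infty\le 1/k$, use $e_kp_k=p_k=p_ke_k$ to get $a-e_kae_k=r_k-e_kr_ke_k$, and pass from operator-norm to measure convergence. One correction: $ap_k=a\,\one_{(1/k,\infty)}(a)$ is not bounded in general, since it carries the entire unbounded part of $a$. This does not affect the argument, because, as you also note, the identities hold in the $*$-algebra $\Stau$ without any boundedness.
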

	\begin{proof}
		Since $\tau(p_k)<\infty$ and $\tau(q_k)<\infty$, we have $\tau(e_k)\le \tau(p_k)+\tau(q_k)<\infty$.
		Let $\delta:=1/k$ and $p:=p_k=\one_{(\delta,\infty)}(a)\le e_k$. Because $p$ is a spectral projection of $a$,
		it commutes with $a$ and $a(1-p)\le \delta(1-p)$, hence $\|a(1-p)\|_\infty\le \delta$.
		For $e_k\ge p$ we compute
		\[
		a-e_kae_k = a(1-p)-e_ka(1-p)e_k,
		\]
		and both terms are bounded by $\delta$ in operator norm, so $\|a-e_kae_k\|_\infty\le 2\delta=2/k$.
		The estimate for $b$ is identical. Operator-norm convergence implies measure convergence.
	\end{proof}
	
	\subsection{Three-lines lemma on a strip}
	
	Let $\Delta=\{z\in\mathbb C:\ 0<\Re z<1\}$ and $\overline{\Delta}=\{z\in\mathbb C:\ 0\le \Re z\le 1\}$. The well-known three--lines theorem can be stated as follows.
	\begin{lem}[Three-lines lemma, \cite{BL76}]\label{lem:three-lines}
		Let $X$ be a Banach space and let $f:\overline{\Delta}\to X$ be bounded and continuous on
		$\overline{\Delta}$ and holomorphic on $\Delta$. For $u\in[0,1]$ set
		\[
		M(u):=\sup_{s\in\mathbb R}\|f(u+is)\|_X.
		\]
		Then $\log M(u)$ is convex on $[0,1]$. In particular, if $0\le \alpha<t<\beta\le 1$ and
		$t=(1-\theta)\alpha+\theta\beta$, then
		\[
		M(t)\le M(\alpha)^{1-\theta}M(\beta)^{\theta}.
		\]
	\end{lem}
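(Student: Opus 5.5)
This is the classical Hadamard three-lines theorem for vector-valued functions. I would reduce it to the scalar case by duality and then use the maximum modulus principle with a Phragm\'en--Lindel\"of damping factor.

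\emph{Step 1: reduction to scalar functions.} Fix $\varphi\in X^*$ with $\|\varphi\|\le 1$ and put $g:=\varphi\circ f$. Then $g$ is bounded and continuous on $\overline{\Delta}$ and holomorphic on $\Delta$. Moreover $\sup_s|g(u+is)|\le M(u)$ for every $u\in[0,1]$. By Hahn--Banach, $\|f(t+is)\|_X=\sup_{\|\varphi\|\le1}|\varphi(f(t+is))|$. Hence it suffices to prove, for every such $g$ and every $s\in\mathbb R$,
\[
|g(t+is)|\le M(\alpha)^{1-\theta}M(\beta)^{\theta}.
\]
Taking the supremum over $\varphi$ and then over $s$ gives the stated inequality $M(t)\le M(\alpha)^{1-\theta}M(\beta)^{\theta}$.

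\emph{Step 2: the scalar estimate on the substrip $\alpha\le\Re z\le\beta$.} To avoid zero bounds, replace $M(\alpha),M(\beta)$ by $A:=M(\alpha)+\eta$ and $B:=M(\beta)+\eta$ with $\eta>0$, and let $\eta\downarrow0$ at the end. Set $\lambda(z):=(z-\alpha)/(\beta-\alpha)$, which maps the substrip onto $\overline{\Delta}$. Consider
\[
h_\varepsilon(z):=g(z)\,A^{\lambda(z)-1}B^{-\lambda(z)}\,e^{\varepsilon (z-t)^2},\qquad \varepsilon>0.
\]
Here $A^{w}:=e^{w\log A}$. The factor $A^{\lambda(z)-1}B^{-\lambda(z)}$ is bounded on the substrip, because its modulus depends only on $\Re z$. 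The factor $|e^{\varepsilon(z-t)^2}|=e^{\varepsilon((x-t)^2-y^2)}$, where $z=x+iy$, tends to $0$ as $|y|\to\infty$ uniformly in $x\in[\alpha,\beta]$. On the two boundary lines we have $|h_\varepsilon|\le e^{\varepsilon(\beta-\alpha)^2}$. Since $g$ is bounded, $|h_\varepsilon|<1$ on the horizontal edges of the rectangle $[\alpha,\beta]\times[-R,R]$ once $R$ is large. The maximum modulus principle on this rectangle then gives $|h_\varepsilon|\le e^{\varepsilon(\beta-\alpha)^2}$ on the whole substrip.

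\emph{Step 3: evaluation and log-convexity.} Evaluate at $z=t+is$, where $\lambda=\theta$:
\[
|g(t+is)|\le A^{1-\theta}B^{\theta}e^{\varepsilon s^2}e^{\varepsilon(\beta-\alpha)^2}.
\]
Let $\varepsilon\to0$ and then $\eta\to0$ to obtain the scalar estimate of Step 1. The endpoint cases $\alpha=0$ or $\beta=1$ need no change, because continuity on $\overline{\Delta}$ is assumed. Since the inequality holds for all $0\le\alpha<t<\beta\le1$, $\log M$ is midpoint-type convex along every chord, i.e.\ convex on $[0,1]$. Here $\log 0=-\infty$ is allowed; if $M$ vanishes at an interior point, the identity theorem applied to each $g$ forces $f\equiv 0$.

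\emph{Main obstacle.} The only delicate point is the unboundedness of the strip, which rules out a direct use of the maximum principle. The Gaussian damping factor $e^{\varepsilon(z-t)^2}$ in Step 2 handles this, using only the boundedness hypothesis on $f$.
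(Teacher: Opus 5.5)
Your proof is correct and takes the same route as the paper: compose with a functional $\varphi$ satisfying $\|\varphi\|\le 1$, apply the scalar three-lines bound to $\varphi\circ f$, and recover $M(t)$ by taking the supremum over $\varphi$ via Hahn--Banach. The only difference is that the paper cites the scalar case (Stein--Shakarchi), whereas you prove it directly on the substrip $\alpha\le\Re z\le\beta$ using the damping factor $e^{\varepsilon(z-t)^2}$ and the maximum modulus principle, which also makes the passage from the full strip to arbitrary $0\le\alpha<\beta\le 1$ explicit.
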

	 For the scalar-valued case (the classical three-lines lemma in a strip), see
	\cite[p.~133, Problem~3]{SS03}.
	For the Banach-valued extension, fix $\varphi\in X^*$ and apply the scalar three-lines lemma
	to the bounded holomorphic scalar function $\varphi\circ f$ on $\Delta$.
	Taking the supremum over all $\varphi\in X^*$ with $\|\varphi\|\le 1$ yields the stated bound for $M(u)$,
	and hence the convexity of $\log M(u)$.
	\subsection{Auxiliary inequalities}
	
	\begin{lem}[Complex powers for bounded invertibles, {\cite[Lemma~4.1]{DDSZ20}}]\label{lem:complex-powers}
		Let $0\le a\in \M$ be invertible. Then the map $z\mapsto a^z$ is analytic on $\mathbb C$ with values in $\M$.
	\end{lem}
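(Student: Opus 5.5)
The plan is to reduce the claim to the entire exponential series in the Banach algebra $\M$, using the continuous functional calculus to express $a^z$ as an exponential.

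First, since $0\le a\in\M$ is invertible, its spectrum is a compact subset $\sigma(a)\subset[m,\|a\|_\infty]$ for some $m>0$. On $[m,\|a\|_\infty]$ the function $\lambda\mapsto\log\lambda$ is continuous and real-valued. The continuous functional calculus therefore gives a bounded self-adjoint operator $h:=\log a\in\M$ with $\|h\|_\infty\le\max\{|\log m|,|\log\|a\|_\infty|\}$.

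Next I would identify $a^z$ with $\exp(zh)$. For each fixed $z\in\mathbb C$ the scalar function $\lambda\mapsto\lambda^z=e^{z\log\lambda}$ is continuous on $\sigma(a)$, and $a^z$ is defined by the continuous (equivalently Borel) functional calculus. The functional calculus is a continuous $*$-homomorphism from $C(\sigma(a))$ into $\M$. The partial sums $\sum_{n\le N} z^n(\log\lambda)^n/n!$ converge uniformly on $\sigma(a)$ to $\lambda^z$. Applying the functional calculus to these partial sums then yields
\[
a^z=\sum_{n=0}^\infty \frac{z^n}{n!}\,h^n,
\]
with the series converging in operator norm. This is consistent with the convention $a^0=s(a)=\one$, since $a$ is invertible.

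Finally, analyticity follows because $\|h^n\|_\infty\le\|h\|_\infty^n$. The series above is a power series in $z$ with coefficients in the Banach space $\M$ and infinite radius of convergence, so it defines an entire $\M$-valued function. Term-by-term differentiation gives $\frac{d}{dz}a^z=h\,a^z$.

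There is no serious obstacle. The only point needing care is the identification of the functional-calculus definition of $a^z$ with the exponential series, which is exactly where uniform convergence on the spectrum and the isometric nature of the continuous functional calculus are used. This step fails without invertibility, since $\log$ is unbounded near $0$, which is why the lemma is restricted to bounded invertible $a$.
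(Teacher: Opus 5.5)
Your argument is correct. The paper gives no proof of its own and simply cites \cite[Lemma~4.1]{DDSZ20}; your route, writing $a^z=\exp(z\log a)=\sum_n z^n(\log a)^n/n!$ as an operator-norm convergent power series with infinite radius of convergence, is the standard self-contained justification. It also uses only the continuous functional calculus in a unital $C^*$-algebra, so it applies verbatim in the corner $e\M e$, which is where the lemma is actually needed in Step~1 of the proof of Theorem~\ref{thm:f-bound}.
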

	
	\begin{lem}[H\"older-type inequality, {\cite[p.~17]{DDZ20}}]\label{lem:holder}
		Let $x,y\in \Stau$ and let $m,n>1$ satisfy $\frac1m+\frac1n=1$.
		If $|x|^m\in \Etau$ and $|y|^n\in \Etau$, then $xy\in \Etau$ and
		\[
		\|xy\|_{\Etau}\le \||x|^m\|_{\Etau}^{1/m}\ \||y|^n\|_{\Etau}^{1/n}.
		\]
	\end{lem}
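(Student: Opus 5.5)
The plan is to reduce the inequality to a pointwise scalar estimate on singular value functions, and then to remove the additive form of Young's inequality by a scaling argument.

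\textbf{Step 1 (submajorization).} I would invoke the classical Fack--Kosaki estimate for generalized singular numbers of products:
\[
\int_0^t \mut{s}{xy}\,ds\le \int_0^t \mut{s}{x}\mut{s}{y}\,ds\qquad(t>0),
\]
that is, $xy\precprec \mu(x)\mu(y)$. This follows from the log-submajorization $\int_0^t\log\mu(s;xy)\,ds\le\int_0^t\log(\mu(s;x)\mu(s;y))\,ds$ together with the fact that log-submajorization implies submajorization, since $\exp$ is increasing and convex. This step uses only results from \cite{FK86}. I expect it to be the only non-elementary ingredient, and I would cite it rather than reprove it.

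\textbf{Step 2 (Young plus scaling).} Set $A:=\||x|^m\|_{\Etau}$ and $B:=\||y|^n\|_{\Etau}$. By the stated property $\mu(f(x))=f(\mu(x))$ for increasing continuous $f$, we have $\mu(|x|^m)=\mu(x)^m$ and $\mu(|y|^n)=\mu(y)^n$. Hence $A=\|\mu(x)^m\|_E$ and $B=\|\mu(y)^n\|_E$.

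For $\lambda>0$, replace $(x,y)$ by $(\lambda x,\lambda^{-1}y)$; this leaves $xy$ unchanged. Pointwise Young gives
\[
\mu(x)\mu(y)\le \frac{\lambda^m}{m}\mu(x)^m+\frac{\lambda^{-n}}{n}\mu(y)^n=:g_\lambda ,
\]
and $g_\lambda\in E$ with $\|g_\lambda\|_E\le \frac{\lambda^m}{m}A+\frac{\lambda^{-n}}{n}B$.

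Combining this with Step~1 gives $xy\precprec g_\lambda$. Full symmetry of $E$ then yields $xy\in\Etau$ and
\[
\|xy\|_{\Etau}\le \frac{\lambda^m}{m}A+\frac{\lambda^{-n}}{n}B .
\]

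\textbf{Step 3 (optimize).} If $A,B>0$, choose $\lambda$ with $\lambda^mA=\lambda^{-n}B$, i.e.\ $\lambda^{m+n}=B/A$. Then, using $\frac1m+\frac1n=1$ and $m+n=mn$, we get $\lambda^m A = A^{1/m}B^{1/n}$. So the right-hand side equals
\[
\Bigl(\tfrac1m+\tfrac1n\Bigr)A^{1/m}B^{1/n}=A^{1/m}B^{1/n}.
\]

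If $A=0$ (or $B=0$), then $\mu(x)=0$ (or $\mu(y)=0$), so $x=0$ (or $y=0$) and the inequality is trivial. Alternatively, let $\lambda\to0$ or $\lambda\to\infty$ in Step~2.
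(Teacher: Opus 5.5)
Your proof is correct. The paper gives no argument for this lemma. It is imported from \cite{DDZ20} and used only as a black box, with $m=n=2$, in Step~3 of Theorem~\ref{thm:f-bound} and in Proposition~\ref{prop:bridge}. So your write-up is a self-contained replacement for a citation, not an alternative to an in-paper proof.

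Your route is the standard one:
\begin{itemize}
\item The Fack--Kosaki submajorization $\int_0^t\mut{s}{xy}\,ds\le\int_0^t\mut{s}{x}\,\mut{s}{y}\,ds$ from \cite{FK86} moves the problem into the function space $E$.
\item There, pointwise Young with the $\lambda$-rescaling, the triangle inequality in $E$, and full symmetry give $\|xy\|_{\Etau}\le\frac{\lambda^m}{m}A+\frac{\lambda^{-n}}{n}B$.
\item The optimization is computed correctly: with $\lambda^{m+n}=B/A$ and $m+n=mn$, both terms equal $\frac{1}{m}A^{1/m}B^{1/n}$ and $\frac{1}{n}A^{1/m}B^{1/n}$ respectively.
\item The degenerate case $A=0$ or $B=0$ is handled properly, either by faithfulness of $\tau$ or by letting $\lambda\to0,\infty$.
\end{itemize}

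What writing it out buys is clarity about hypotheses. The argument uses only full symmetry of $E$ and the identity $\mu(|x|^m)=\mu(x)^m$; the Fatou property, assumed throughout the paper, plays no role here. It also shows the lemma is really the scalar H\"older inequality $\|\mu(x)\mu(y)\|_E\le\|\mu(x)^m\|_E^{1/m}\|\mu(y)^n\|_E^{1/n}$, transported to $\Etau$ by submajorization.

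The one thing to tighten is the justification sketched in Step~1. You derive submajorization from log-submajorization, but for functions on $[0,\infty)$ that may be unbounded near $0$ this passage is itself a nontrivial lemma. It is cleaner to cite the Fack--Kosaki estimate directly in the form
\[
\int_0^t f(\mut{s}{xy})\,ds\le\int_0^t f(\mut{s}{x}\,\mut{s}{y})\,ds,
\]
valid for continuous increasing $f$ on $[0,\infty)$ with $f(0)=0$ and $f\circ\exp$ convex, and apply it with $f(u)=u$.
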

	
	\begin{lem}[Arithmetic--geometric mean inequality, {\cite[Corollary~4.8]{DDSZ20}}]\label{lem:AMGM}
		Let $a,b\ge 0$ in $\Etau$. Then $a^{1/2}b^{1/2}\in\Etau$ and
		\begin{equation}\label{eq:AMGM}
			2\|a^{1/2}b^{1/2}\|_{\Etau}\le \|a+b\|_{\Etau}.
		\end{equation}
	\end{lem}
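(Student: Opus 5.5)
The plan is to prove the pointwise singular-value inequality
\[
2\,\mu(a^{1/2}b^{1/2})\le \mu(a+b)
\]
by a $2\times 2$ block argument in $\mathbb M_2(\M)$, which carries the trace $\tau\otimes\mathrm{tr}$. Membership of $a^{1/2}b^{1/2}$ in $\Etau$ and the norm estimate \eqref{eq:AMGM} then follow at once, because $\Etau$ is defined through $\mu$ and $E$ is a symmetric (solid, rearrangement-invariant) space.

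The first step is to set
\[
x:=\begin{pmatrix} a^{1/2} & b^{1/2}\\ 0&0\end{pmatrix}\in S(\tau\otimes\mathrm{tr}).
\]
Then $xx^*=\diag(a+b,0)$. The other product is
\[
P:=x^*x=\begin{pmatrix} a & c\\ c^*&b\end{pmatrix},\qquad c:=a^{1/2}b^{1/2}.
\]
Since $\mu(x^*x)=\mu(xx^*)$, we get $\mu(P)=\mu(a+b)$, computed with respect to $\tau\otimes\mathrm{tr}$ and $\tau$ respectively.

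Next let $W=\diag(\one,-\one)$, a self-adjoint unitary. The block
\[
Y:=\begin{pmatrix}0&c\\ c^*&0\end{pmatrix}
\]
satisfies $2Y=P-WPW^*$. Since $WPW^*\ge 0$, this gives the operator inequality $2Y\le P$.

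The third step passes to positive parts. Let $e:=\one_{(0,\infty)}(Y)$. Then $(2Y)_+=e(2Y)e\le ePe$, so $\mu((2Y)_+)\le\mu(ePe)\le\mu(P)$. Here the first inequality is monotonicity of $\mu$ on positive operators, and the second is \eqref{eq:mu-axb}.

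The remaining step, which needs the most care in the $\tau$-measurable setting, is to identify $\mu(Y_+)$ with $\mu(c)$. I would argue as follows.
\begin{itemize}[leftmargin=2.2em]
\item Since $WYW^*=-Y$, the operators $Y_+$ and $Y_-$ are unitarily equivalent, so they have the same distribution function.
\item Since $Y_+Y_-=0$, we have $|Y|=Y_++Y_-=\diag(|c^*|,|c|)$.
\item Distribution functions therefore add: $2\,\dsf{s}{Y_+}=\dsf{s}{|c^*|}+\dsf{s}{|c|}=2\,\dsf{s}{|c|}$.
\end{itemize}
Hence $\dsf{s}{Y_+}=\dsf{s}{|c|}$ for all $s$, and so $\mu(Y_+)=\mu(c)$.

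Combining the steps gives $2\mu(c)=\mu((2Y)_+)\le\mu(P)=\mu(a+b)$ pointwise. Since $a+b\in\Etau$, it follows that $c\in\Etau$ and $2\|c\|_{\Etau}\le\|a+b\|_{\Etau}$.

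The main technical point is making the spectral manipulations rigorous for unbounded $\tau$-measurable operators. This concerns the order inequality $2Y\le P$ in $S(\tau\otimes\mathrm{tr})$ and the equidistribution of $Y_\pm$. Should these cause difficulty, I would first prove the inequality for bounded $a,b$ (for instance $a\wedge n$, $b\wedge n$) and then pass to the limit. By Lemma~\ref{lem:trunc-powers} and Lemma~\ref{lem:measure-cont}, $(a\wedge n)^{1/2}(b\wedge n)^{1/2}\to c$ in measure. The inequality $\mu(a\wedge n+b\wedge n)\le\mu(a+b)$ holds since $a\wedge n+b\wedge n\le a+b$. Lemma~\ref{lem:fatou-lsc} then yields the norm inequality in the limit.
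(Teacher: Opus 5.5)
Your argument is correct, and it differs from the paper, which gives no proof of this lemma: it imports the inequality as a black box from \cite[Corollary~4.8]{DDSZ20}, the same source that supplies Theorem~\ref{thm:heinz-Etau}. You instead carry the classical Bhatia--Kittaneh $2\times2$ block-matrix argument over to $\mathbb M_2(\M)$ with trace $\tau\otimes\mathrm{tr}$, and each step goes through directly in $S(\tau\otimes\mathrm{tr})$.

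\begin{itemize}
\item The relation $2Y=P-WPW^*\le P$ is an algebraic identity in that $*$-algebra.
\item Compressing by the spectral projection $e$ gives $0\le(2Y)_+\le ePe$.
\item The identification $\mu(Y_+)=\mu(c)$ follows from $WYW^*=-Y$, from $|Y|=\diag(|c^*|,|c|)$, and from $\dsf{s}{|c^*|}=\dsf{s}{|c|}$ for $s\ge 0$.
\end{itemize}

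You use the last equality tacitly. It holds because $\one_{(s,\infty)}(|c^*|)=v\,\one_{(s,\infty)}(|c|)\,v^*$ for the partial isometry $v$ in $c=v|c|$; this is just $\mu(c^*)=\mu(c)$.

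Your route gives more than the lemma asks for, namely the pointwise estimate $2\mu(a^{1/2}b^{1/2})\le\mu(a+b)$. This yields the norm inequality in every symmetric space $\Etau$, without full symmetry and without the Fatou property. For the same reason, the truncation fallback you sketch is unnecessary, and it would reintroduce a dependence on Lemma~\ref{lem:fatou-lsc}. The paper's citation buys only brevity and uniformity with its other imported results. Your self-contained version fully covers the lemma's use in Step~3 of Theorem~\ref{thm:f-bound}.
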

	
	\begin{lem}[ALT-type submajorization, \cite{Han16}]\label{lem:alt-submaj}
		Let $a,b\ge 0$ in $\Stau$ and let $r\ge 1$. Then
		\[
		|ab|^{\,r}\ \precprec\ a^{r}b^{r}.
		\]
		Here $a^{r}b^{r}$ need not be positive; the relation $\precprec$ is understood in terms of generalized
		singular numbers, i.e.\ $\mu(|ab|^{\,r})\precprec \mu(a^{r}b^{r})$.
	\end{lem}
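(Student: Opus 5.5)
The plan is to reduce the claim to an Araki--Lieb--Thirring \emph{log}-submajorization and then pass to ordinary submajorization using convexity of $s\mapsto e^{s}$.

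Since $\mu(x)=\mu(|x|)=\mu(x^*x)^{1/2}$, we have
\[
\mu(|ab|^{r})=\mu(ab)^{r}=\mu(b a^{2} b)^{r/2},\qquad \mu(a^{r}b^{r})=\mu(b^{r}a^{2r}b^{r})^{1/2}.
\]
Put $A:=a^{2}\ge 0$ and $B:=b\ge0$. The lemma then follows once we show the ALT log-submajorization
\[
\int_0^t\log \mu\bigl(s;\,(BAB)^{r}\bigr)\,ds\le \int_0^t \log \mu\bigl(s;\,B^{r}A^{r}B^{r}\bigr)\,ds,\qquad t>0,\ r\ge1,
\]
after taking square roots. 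Indeed, composing with the increasing convex function $s\mapsto e^{s/2}$ turns log-submajorization into Hardy--Littlewood submajorization. This is the Fack--Kosaki/Weyl-type principle: if $\int_0^t\log f\le\int_0^t\log g$ for all $t$ with $f,g$ decreasing, then $\int_0^t\phi(\log f)\le\int_0^t\phi(\log g)$ for increasing convex $\phi$. The result is $\mu(|ab|^r)\precprec\mu(a^rb^r)$.

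\textbf{Norm inequality.} First I would establish the bounded, top-singular-value case $\|(BAB)^{r}\|_\infty\le\|B^{r}A^{r}B^{r}\|_\infty$. Writing $p=1/r\in(0,1]$ and replacing $A,B$ by $A^{r},B^{r}$, this is equivalent to the Cordes inequality
\[
\|X^{p}Y^{p}\|_\infty\le\|XY\|_\infty^{p},\qquad 0\le p\le1,\ X,Y\ge0.
\]
The Cordes inequality itself is proved in two steps. Normalize so that $\|XY\|=1$, i.e.\ $YX^{2}Y\le 1$, i.e.\ $X^{2}\le Y^{-2}$ on the support. The Löwner--Heinz inequality then gives $X^{2p}\le Y^{-2p}$, hence $\|X^pY^p\|\le1$. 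For non-invertible $Y$ I would add $\varepsilon\one$ and let $\varepsilon\to0$. Alternatively, the set of $p$ where the inequality holds can be handled by the three-lines Lemma~\ref{lem:three-lines} applied to $z\mapsto X^{z}Y^{z}$, with Lemma~\ref{lem:complex-powers} used for invertible truncations.

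\textbf{Upgrade to log-majorization.} The main obstacle is passing from the operator norm to all partial products $\int_0^t\log\mu$. In matrices this is done with antisymmetric tensor powers, which are unavailable in a general $\M$. I would first treat bounded invertible $A,B$ in a finite corner $e\M e$ with $\tau(e)<\infty$, and use the Fuglede--Kadison determinant characterization of Fack--Kosaki:
\[
\int_0^t\log\mu(s;x)\,ds=\sup\Bigl\{\tau(q)\log\Delta_{q}(qx^*xq)^{1/2}\Bigr\}
\]
(suitably normalized), together with multiplicativity of $\Delta$. With these, the norm inequality above, applied through the standard Horn-type argument of Fack--Kosaki (or Kosaki's ALT theorem for $\tau$-measurable operators), yields the log-submajorization.

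\textbf{Approximation.} Finally I would remove the restrictions on $A,B$. Replace $a,b$ by $e_k(a\wedge n)e_k+\varepsilon e_k$ and similarly for $b$, using Lemma~\ref{lem:trunc-powers}, Lemma~\ref{lem:seq-cutdown} and Remark~\ref{rem:finite-corner-in-Etau}. Both $|ab|^{r}$ and $a^{r}b^{r}$ converge in measure to the desired limits by Lemma~\ref{lem:measure-cont}. Convergence in measure gives $\mu(s;\cdot)$ convergence at continuity points, so the integral inequalities $\int_0^t\mu(|ab|^r)\le\int_0^t\mu(a^rb^r)$ pass to the limit. On the right-hand side this uses Fatou's lemma together with domination when the integrals are finite; when they are infinite the inequality is trivial.
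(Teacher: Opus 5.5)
The paper gives no argument for this lemma; it quotes it from \cite{Han16}. Your skeleton is sound, and the following pieces are correct: the identities $\mu(|ab|^r)=\mu(ba^2b)^{r/2}$ and $\mu(a^rb^r)=\mu(b^ra^{2r}b^r)^{1/2}$, the passage from log-submajorization to $\precprec$ via the increasing convex map $s\mapsto e^{s/2}$, and the Cordes/L\"owner--Heinz proof of $\|(BAB)^r\|_\infty\le\|B^rA^rB^r\|_\infty$.

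\textbf{The main gap.} It sits exactly at the step you call the main obstacle. The norm inequality only controls $\mu(0;\cdot)$. Nothing in your sketch turns it into $\int_0^t\log\mu(s;(BAB)^r)\,ds\le\int_0^t\log\mu(s;B^rA^rB^r)\,ds$ for $t>0$.
\begin{itemize}
\item In matrices this upgrade works by applying the norm inequality to $\Lambda^kA,\Lambda^kB$, because $\Lambda^k$ is multiplicative and commutes with powers.
\item The Fuglede--Kadison variational formula has no such functoriality. The supremum runs over compressions $q(\cdot)q$, and $q(BAB)^rq\neq(qBq\,qAq\,qBq)^r$, so the norm inequality cannot be used inside $q\M q$.
\item Multiplicativity of $\Delta$ only gives the top-level identity $\Delta((BAB)^r)=\Delta(B^rA^rB^r)$ in $e\M e$, i.e.\ the single value $t=\tau(e)$.
\end{itemize}
What is actually needed is a log-level version of Lemma~\ref{lem:three-lines}: subharmonicity of $z\mapsto\int_0^t\log\mu(s;F(z))\,ds$ for bounded analytic $F$. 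This is where determinant theory genuinely enters. Apply it to $F(z)=a^{rz}b^{rz}$ with $a,b$ bounded and invertible:
\begin{itemize}
\item $F$ is unitary on $\Re z=0$;
\item $\mu(F(1+is))=\mu(a^rb^r)$;
\item $F(1/r)=ab$.
\end{itemize}
This gives $r\int_0^t\log\mu(s;ab)\,ds\le\int_0^t\log\mu(s;a^rb^r)\,ds$ directly, and the Cordes step becomes superfluous. Falling back on ``Kosaki's ALT theorem'' is legitimate, but it cites the log-majorization form of the statement itself. That is exactly what the paper does with \cite{Han16}, and your remaining contribution is only the exponentiation step.

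\textbf{The approximation gap.} The approximation step does not reach the stated generality. Lemma~\ref{lem:seq-cutdown} needs $\tau(\one_{(\delta,\infty)}(a))<\infty$ for all $\delta>0$, i.e.\ $\tau$-compactness, while the lemma concerns arbitrary $a,b\in\Stau$. If $\tau(\one)=\infty$ and $a\ge\varepsilon\one$, then $e_k$ has infinite trace. Compressions $eae$ by finite projections do not converge in measure either, since $\mu(s;a-eae)\ge\varepsilon$ for all $s$.

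This case is not academic. In Step~3 of Theorem~\ref{thm:f-bound}, Case~1 with $\one\in E$ and $\tau(\one)=\infty$, the lemma is applied to powers of $a\wedge n+\varepsilon\one$, which is never $\tau$-compact. So the finite-corner reduction has to be replaced by an argument valid directly in the semifinite algebra.

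Finally, Fatou only handles the left-hand side; on the right you need an explicit upper bound.
\begin{itemize}
\item For truncations it is available: $(a\wedge n)^r=\phi_n(a)a^r$ with $0\le\phi_n\le1$ commuting with $a$. Hence $\mu((a\wedge n)^r(b\wedge n)^r)\le\mu(a^rb^r)$ by \eqref{eq:mu-axb}.
\item The $\varepsilon$-step and the (in the $\tau$-compact case) corner step converge in operator norm once everything is bounded.
\end{itemize}
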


	\begin{lem}[Douglas-type factorization,{ \cite{Dou66} or \cite[Lemma~3.32, p.~29]{HK21}}]\label{lem:douglas}
		Let $ a,b\ge 0$ in $\Stau$ and $c\in \Stau$. Consider the $2\times2$ block operator
$
		\begin{pmatrix}
			a & c\\
			c^{*} & b
		\end{pmatrix}
		\in \mathbb M_2(\Stau)
$.
		Then
$
		\begin{pmatrix}
			a & c\\
			c^{*} & b
		\end{pmatrix}\ge 0
	$
		if and only if there exists a contraction $z\in \M$ such that	$c = a^{1/2}\, z\, b^{1/2}$.

	\end{lem}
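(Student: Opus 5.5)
The plan is to prove the two implications separately. Throughout, we work inside the $*$-algebra $S(\tau\otimes\mathrm{tr}_2)$ of measurable operators affiliated with $\mathbb M_2(\M)$, which we identify with $\mathbb M_2(\Stau)$. Since sums and products there are taken as closures and the algebra is a genuine $*$-algebra, block-matrix identities can be manipulated formally. Positivity is understood in this algebra.

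\emph{Sufficiency.} Suppose $c=a^{1/2}zb^{1/2}$ with $z\in\M$ and $\|z\|_\infty\le 1$. Put $d=\diag(a^{1/2},b^{1/2})$. Since $1-z^*z\ge 0$ in $\M$, we have the factorization
\[
\begin{pmatrix} 1 & z\\ z^* & 1\end{pmatrix}
=\begin{pmatrix} 1 & 0\\ z^* & (1-z^*z)^{1/2}\end{pmatrix}
\begin{pmatrix} 1 & z\\ 0 & (1-z^*z)^{1/2}\end{pmatrix}=w^*w\ge 0 .
\]
Then
\[
\begin{pmatrix} a & c\\ c^* & b\end{pmatrix}=d\,w^*w\,d=(wd)^*(wd)\ge 0 .
\]
Here we use that $x^*x\ge 0$ for every $x$ in the measurable algebra.

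\emph{Necessity.} Let $P=\begin{pmatrix} a & c\\ c^*& b\end{pmatrix}\ge 0$ and put $R=P^{1/2}\in S(\tau\otimes\mathrm{tr}_2)$, so that $P=R^*R$. Write $r_1=Re_{11}$ and $r_2=Re_{12}$, where $e_{ij}$ are the matrix units. Thus $r_1$ is $R$ with only its first column kept, and $r_2$ is the second column of $R$ moved into the second slot of the first-column position. Reading off the entries of $R^*R$ gives
\[
r_1^*r_1=a\oplus 0,\qquad r_2^*r_2=b\oplus 0,\qquad r_1^*r_2=c\oplus 0,
\]
with all identities in the $(1,1)$ corner. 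Next take the polar decompositions $r_i=v_i|r_i|$ in $S(\tau\otimes\mathrm{tr}_2)$. The partial isometries $v_i$ lie in $\mathbb M_2(\M)$, and $|r_1|=a^{1/2}\oplus 0$, $|r_2|=b^{1/2}\oplus 0$. This gives
\[
c\oplus 0=r_1^*r_2=(a^{1/2}\oplus0)\,v_1^*v_2\,(b^{1/2}\oplus 0).
\]
Let $z$ be the $(1,1)$ entry of $v_1^*v_2$. Then $z\in\M$ and $\|z\|_\infty\le\|v_1^*v_2\|_\infty\le 1$. Compressing to the $(1,1)$ corner yields $c=a^{1/2}zb^{1/2}$.

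The main obstacle is the technical justification in the unbounded setting. Three points need care: that polar decomposition is available for $\tau\otimes\mathrm{tr}_2$-measurable operators with the partial isometry in $\mathbb M_2(\M)$; that $|r_1|$ equals $(r_1^*r_1)^{1/2}$ as measurable operators, uniformly in the corner; and that the products above are the strong products, that is, closures. I would handle all three by appealing to the fact that $S(\tau\otimes\mathrm{tr}_2)$ is a $*$-algebra with these operations and that polar decomposition and continuous functional calculus are standard there \cite{FK86,DDSZ20}. Everything else is then purely algebraic.
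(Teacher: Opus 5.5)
Your proof is correct up to one index slip. With the standard matrix units, $Re_{12}$ moves the \emph{first} column of $R$ into the second slot. That gives $(Re_{12})^*(Re_{12})=0\oplus a$ and $r_1^*(Re_{12})=a\,e_{12}$, not the identities you state. The operator you describe in words is $r_2=Re_{21}$, and for it $r_2^*r_2=e_{12}Pe_{21}=b\oplus 0$ and $r_1^*r_2=e_{11}Pe_{21}=c\oplus 0$ do hold. With that change, both directions go through as written.

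The paper does not prove this lemma; it only cites Douglas's factorization theorem and a reference lemma for it. Douglas's theorem concerns bounded operators. The usual bounded-operator derivations of the block form take one of two routes:
\begin{itemize}
\item handle invertible $a,b$ first, where positivity is equivalent to $\|a^{-1/2}cb^{-1/2}\|_\infty\le 1$, and then approximate;
\item build the contraction from the estimate $|\langle c\eta,\xi\rangle|^2\le\langle a\xi,\xi\rangle\langle b\eta,\eta\rangle$ on the ranges of $a^{1/2}$ and $b^{1/2}$.
\end{itemize}
Moving either route to unbounded $\tau$-measurable $a,b,c$ takes extra work. You would have to handle domains, pass a weak$^*$ limit of contractions through unbounded outer factors, and check that the constructed $z$ commutes with $\M'$.

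The paper genuinely needs the unbounded case. In Proposition~\ref{prop:bridge} the lemma is applied to $a+b$, $f_t$ and $b_t$ in $\Etau$, which need not be bounded.

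Your route avoids all of this: you take the square root of the block operator, split it into columns, and use polar decomposition in $S(\tau\otimes\mathrm{tr}_2)$. Every step is an identity in the $*$-algebra of measurable operators. Moreover, $z=(v_1^*v_2)_{11}$ lies in $\M$ automatically, because polar parts of affiliated operators do. The only cost is the standard facts you list:
\begin{itemize}
\item the $*$-isomorphism $\mathbb M_2(\Stau)\cong S(\tau\otimes\mathrm{tr}_2)$ with strong sums and products;
\item functional calculus;
\item polar decomposition with the partial isometry in $\mathbb M_2(\M)$.
\end{itemize}
So your argument is a self-contained proof of exactly the measurable version that the paper takes on citation.
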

	
	\section{Proof of Theorem~\ref{thm:main}}\label{sec:bourin}
	
	Throughout this section, fix $a,b\ge 0$ in $\Etau$. For $t\in[0,1]$ set
	\[
	b_t=b_t(a,b):=a^t b^{1-t}+b^t a^{1-t}.
	\]
	For $t\in[\tfrac12,1]$ define
	\[
	f_t=f_t(a,b):=b^{1-t}a^{2t-1}b^{1-t}+a^{1-t}b^{2t-1}a^{1-t}.
	\]
	We use the convention $u^0=s(u)$ for $u\ge 0$.
	
	\subsection{A complex interpolation estimate for $f_t$}
	
	\begin{thm}\label{thm:f-bound}
		For every $t\in[\tfrac12,1]$,
		\[
		\norm{f_t}_{\Etau}\le 2^{\max\{4t-3,\ 0\}}\ \norm{a+b}_{\Etau}.
		\]
	\end{thm}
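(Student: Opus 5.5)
Let $\Phi(t):=\norm{f_t}_{\Etau}$ for $t\in[\tfrac12,1]$. The plan is to bound $\Phi$ at three anchor points and interpolate between them with Lemma~\ref{lem:three-lines}. The target exponent $\max\{4t-3,0\}$ is piecewise linear in $t$, with value $0$ on $[\tfrac12,\tfrac34]$ and value $1$ at $t=1$. So it suffices to show:
\begin{itemize}
\item $\Phi(\tfrac12)\le\norm{a+b}_{\Etau}$;
\item $\Phi(\tfrac34)\le\norm{a+b}_{\Etau}$;
\item $\Phi(1)\le 2\norm{a+b}_{\Etau}$.
\end{itemize}
We then use log-convexity on $[\tfrac12,\tfrac34]$ and on $[\tfrac34,1]$.

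\textbf{Endpoints.} At $t=\tfrac12$ we have $f_{1/2}=b^{1/2}s(a)b^{1/2}+a^{1/2}s(b)a^{1/2}$. Each term is bounded by $b$ and $a$ respectively, so $\Phi(\tfrac12)\le\norm{a+b}$ by monotonicity of the norm for positive operators. At $t=1$ we have $f_1=s(b)as(b)+s(a)bs(a)$, and each term has norm at most $\norm{a}\le\norm{a+b}$, respectively $\norm{b}\le\norm{a+b}$, which gives the factor $2$.

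\textbf{Middle point $t=\tfrac34$.} Here $f_{3/4}=b^{1/4}a^{1/2}b^{1/4}+a^{1/4}b^{1/2}a^{1/4}=|a^{1/4}b^{1/4}|^2+|b^{1/4}a^{1/4}|^2$. By Lemma~\ref{lem:alt-submaj} with $r=2$, $|a^{1/4}b^{1/4}|^2\precprec a^{1/2}b^{1/2}$. However, the sum must be handled jointly to avoid losing a factor $2$. I would write $f_{3/4}=X^*X$ with $X$ the column $(a^{1/4}b^{1/4},\,b^{1/4}a^{1/4})^T$, so $\norm{f_{3/4}}=\norm{XX^*}$ in $\mathbb M_2(\M)$. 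The diagonal of $XX^*$ consists of $a^{1/4}b^{1/2}a^{1/4}$ and $b^{1/4}a^{1/2}b^{1/4}$. Alternatively, write $f_{3/4}=Y^*Y$ with $Y=\begin{pmatrix}a^{1/4}b^{1/4}&0\\0&b^{1/4}a^{1/4}\end{pmatrix}$ suitably combined, and aim to compare with $\norm{a^{1/2}b^{1/2}}$-type quantities plus Lemma~\ref{lem:AMGM}. This is the main obstacle: getting constant $1$ rather than $2$ at $t=\tfrac34$. I would first try to interpolate the whole range $[\tfrac12,1]$ on a single analytic family, using Hölder (Lemma~\ref{lem:holder}) rather than a direct estimate at $\tfrac34$.

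\textbf{Analytic family.} First reduce, via truncations and Lemma~\ref{lem:seq-cutdown}, Lemma~\ref{lem:fatou-lsc} and Remark~\ref{rem:finite-corner-in-Etau}, to the case where $a,b$ are bounded and invertible in a finite corner $e\M e$. In that setting complex powers are analytic by Lemma~\ref{lem:complex-powers}. Define
\[F(z)=b^{1-z}a^{2z-1}b^{1-z}+a^{1-z}b^{2z-1}a^{1-z},\]
rescaled so that the strip parameter runs over $[\tfrac12,1]$. On a vertical line $\Re z=t$, unitary factors $a^{is},b^{is}$ appear. Using Lemma~\ref{lem:unitary-inv}, together with the fact that the imaginary parts can be grouped as $u\,b^{1-t}a^{2t-1}b^{1-t}\,v$ up to interleaved unitaries, each term on the boundary lines reduces to real-parameter expressions. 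These are controlled by the endpoint estimates, with the $t=\tfrac34$ boundary handled via Hölder with $m=n=2$ and Lemma~\ref{lem:AMGM}: each term has the form $x^*y$ with $\norm{|x|^2},\norm{|y|^2}$ comparable to $\norm{a^{1/2}b^{1/2}}\le\tfrac12\norm{a+b}$. Summing the two terms then gives $\norm{a+b}$.

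Lemma~\ref{lem:three-lines} applied on $[\tfrac12,\tfrac34]$ and on $[\tfrac34,1]$ yields the piecewise exponents. Finally, the approximation step passes to general $a,b$ by Lemma~\ref{lem:fatou-lsc}.
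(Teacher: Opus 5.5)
\textbf{Gap on the line $\Re z=\tfrac12$.} Most of your plan matches the paper: the reduction to bounded invertibles in a finite corner, the analytic family $\Phi(z)=b^{1-z}a^{2z-1}b^{1-z}+a^{1-z}b^{2z-1}a^{1-z}$, three lines on $[\tfrac12,\tfrac34]$ and $[\tfrac34,1]$, unitary invariance on $\Re z=1$, and H\"older $(2,2)$ with Lemma~\ref{lem:alt-submaj} and \eqref{eq:AMGM} on $\Re z=\tfrac34$. The gap is on $\Re z=\tfrac12$.

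The three-lines lemma needs $M(\tfrac12)=\sup_s\norm{\Phi(\tfrac12+is)}_{\Etau}$, not $\norm{f_{1/2}}_{\Etau}$. There, your claim that boundary values ``reduce to real-parameter expressions controlled by the endpoint estimates'' is false. Indeed
\[
\Phi(\tfrac12+is)=b^{-is}\bigl(b^{1/2}a^{2is}b^{1/2}\bigr)b^{-is}+a^{-is}\bigl(a^{1/2}b^{2is}a^{1/2}\bigr)a^{-is},
\]
and after discarding the outer unitaries, the unitaries $a^{2is}$ and $b^{2is}$ stay sandwiched in the middle. The terms are no longer positive, so your $t=\tfrac12$ argument ($b^{1/2}s(a)b^{1/2}\le b$ plus monotonicity) does not apply. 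Term by term you only get $\norm{b^{1/2}Ub^{1/2}}_{\Etau}\le\norm{b}_{\Etau}$, hence $M(\tfrac12)\le\norm{a}_{\Etau}+\norm{b}_{\Etau}$, which can be as large as $2\norm{a+b}_{\Etau}$. Interpolating that with $M(\tfrac34)\le\norm{a+b}_{\Etau}$ gives only the constant $2^{3-4t}$ on $[\tfrac12,\tfrac34]$, not $1$, so the statement is not proved on that range.

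Relatedly, your opening plan to use log-convexity of $t\mapsto\norm{f_t}_{\Etau}$ is not available. Only the vertical-line suprema are log-convex, so each anchor bound must hold uniformly in $s$.

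\textbf{The missing idea.} Treat the two terms jointly. Take the row $R=(a^{1/2}\ \ b^{1/2})$ and the diagonal unitary $W_s=\diag(a^{-is}b^{2is}a^{-is},\,b^{-is}a^{2is}b^{-is})$. Then $\Phi(\tfrac12+is)=RW_sR^*$ and $RR^*=a+b$, so
\[
\begin{pmatrix} a+b & \Phi(\tfrac12+is)\\ \Phi(\tfrac12+is)^* & a+b\end{pmatrix}
=\begin{pmatrix}R&0\\0&R\end{pmatrix}\begin{pmatrix}I&W_s\\W_s^*&I\end{pmatrix}\begin{pmatrix}R^*&0\\0&R^*\end{pmatrix}\ge0 .
\]
Lemma~\ref{lem:douglas} then gives $\Phi(\tfrac12+is)=(a+b)^{1/2}c_s(a+b)^{1/2}$ with $\norm{c_s}_\infty\le1$. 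H\"older with $(2,2)$, applied to $x=(a+b)^{1/2}c_s$ and $y=(a+b)^{1/2}$, then yields $\norm{\Phi(\tfrac12+is)}_{\Etau}\le\norm{a+b}_{\Etau}$ uniformly in $s$. The paper instead uses Theorem~\ref{thm:heinz-Etau} with $z=c_s$, $t=\tfrac12$; either works.

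\textbf{Minor point at $t=\tfrac34$.} Your worry about losing a factor $2$ there is unfounded: each term is bounded by $\norm{a^{1/2}b^{1/2}}_{\Etau}$, and \eqref{eq:AMGM} already absorbs the factor $2$. On the complex line you should make the split explicit, writing $a^{2is}=a^{is}a^{is}$ and putting one factor into each H\"older factor. You also need the inequality ($\le$, via Lemma~\ref{lem:alt-submaj}), not mere comparability. The real obstacle is at $\Re z=\tfrac12$.
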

	
	\begin{proof}
		\textbf{Step A: the endpoint $t=\tfrac12$.}
		When $t=\tfrac12$ we have
		\[
		f_{1/2}=b^{1/2}s(a)b^{1/2}+a^{1/2}s(b)a^{1/2}\le b+a,
		\]
		since $s(a),s(b)\le \one$ and $0\le c^{1/2}pc^{1/2}\le c$ for any projection $p$.
		By solidity, $\|f_{1/2}\|_{\Etau}\le \|a+b\|_{\Etau}$.
		
		\medskip
		\textbf{Step B: the endpoint $t=1$.}
		Here $f_1=s(b)as(b)+s(a)bs(a)$. Since $s(a),s(b)\in \M$ are projections, by \eqref{eq:bimodule},
		\[
		\|s(b)as(b)\|_{\Etau}\le \|a\|_{\Etau},\qquad \|s(a)bs(a)\|_{\Etau}\le \|b\|_{\Etau}.
		\]
		Since $0\le a\le a+b$ and $0\le b\le a+b$, solidity of $\Etau$ yields $\|a\|_{\Etau},\|b\|_{\Etau}\le \|a+b\|_{\Etau}$.
		Hence
		\[
		\|f_1\|_{\Etau}\le \|a\|_{\Etau}+\|b\|_{\Etau}\le 2\|a+b\|_{\Etau},
		\]
		matching $2^{4\cdot1-3}=2$.
		
		\medskip
		For the remainder, assume $t\in(\tfrac12,1)$ so that $1-t>0$ and $2t-1>0$.
		
		\medskip
		\textbf{Step 0: reduction to bounded invertibles in a unital finite setting.}
		We first establish the estimate for bounded invertible positive operators in a von Neumann algebra with a unit $e$ such that
		$e\M e\subset \Etau$ (e.g.\ $e=\one$ when $\tau(\one)<\infty$ or $\one\in E$, or $e=e_k$ a finite projection).
		Then we pass to general $a,b\in\Etau$ by truncation and (when $\tau(\one)=\infty$ and $\one\notin E$) by a finite-corner approximation.
		
		\smallskip
		\emph{Case 1: $\tau(\one)<\infty$ or $\one\in E$ (so $\M\subset \Etau$).}
		For $n\in\mathbb N$ set $a_n:=a\wedge n$, $b_n:=b\wedge n$, and for $\varepsilon>0$ define
		\[
		a_{n,\varepsilon}:=a_n+\varepsilon\one,\qquad b_{n,\varepsilon}:=b_n+\varepsilon\one.
		\]
		Note that $0\le a_n+b_n\le a+b$, hence $\|a_n+b_n\|_{\Etau}\le \|a+b\|_{\Etau}$, and
		\[
		\|a_{n,\varepsilon}+b_{n,\varepsilon}\|_{\Etau}
		\le \|a+b\|_{\Etau}+2\varepsilon\|\one\|_{\Etau}
		\quad(\text{when }\one\in\Etau).
		\]
		
		\smallskip
		\emph{Case 2: $\tau(\one)=\infty$ and $\one\notin E$.}
		Then Lemma~\ref{lem:vanishing-infty} implies $\mut{t}{a},\mut{t}{b}\to 0$ as $t\to\infty$, so Lemma~\ref{lem:seq-cutdown} applies.
		Let $(e_k)$ be as in Lemma~\ref{lem:seq-cutdown}. For $k,n\in\mathbb N$ set
		\[
		a_{k,n}:=e_k\,(a\wedge n)\,e_k,\qquad b_{k,n}:=e_k\,(b\wedge n)\,e_k,
		\]
		and for $\varepsilon>0$ define
		\[
		a_{k,n,\varepsilon}:=a_{k,n}+\varepsilon e_k,\qquad
		b_{k,n,\varepsilon}:=b_{k,n}+\varepsilon e_k,
		\]
		which are bounded and invertible in the finite von Neumann algebra $e_k\M e_k$ (unit $e_k$).
		By Remark~\ref{rem:finite-corner-in-Etau}, $e_k\M e_k\subset \Etau$.
		
		In either case, it suffices to prove the estimate for bounded invertible positive $a,b$ in a von Neumann algebra with unit $e$ such that $e\M e\subset \Etau$.
		For the rest of the proof we temporarily write $a,b$ for such a bounded invertible pair.
		
		\smallskip
		\emph{Remark on unitaries in corners.}
		If $e\neq \one$, then every unitary $u\in e\M e$ may be viewed as a unitary in $\M$ via the standard extension
		$U:=u+(\one-e)\in\M$.
		Since $a=eae$ and $b=ebe$, conjugation by $U$ agrees with conjugation by $u$ on expressions involving $a$ and $b$.
		We will freely use Lemma~\ref{lem:unitary-inv} for such unitaries (in particular, for imaginary powers $a^{is}$, $b^{is}$).
		
		\medskip
		\textbf{Step 1: complex interpolation on a strip.}
		Consider the strip
		\[
		\Sigma:=\{z\in\mathbb C:\ \tfrac12\le \Re z\le 1\}.
		\]
		For $z\in\Sigma$ define
		\[
		\Phi(z):=b^{1-z}a^{2z-1}b^{1-z}+a^{1-z}b^{2z-1}a^{1-z}.
		\]
		By Lemma~\ref{lem:complex-powers}, $z\mapsto a^z$ and $z\mapsto b^z$ are operator-norm analytic, hence $\Phi$ is analytic on the interior of $\Sigma$
		and continuous on $\Sigma$ as an $\Etau$-valued map.
		Indeed, since $e\M e\subset \Etau$ and $\|c\|_{\Etau}\le \|c\|_\infty\,\|e\|_{\Etau}$ for $c\in e\M e$,
		the inclusion $e\M e\hookrightarrow \Etau$ is continuous, so operator-norm analyticity implies $\Etau$-valued analyticity.
		For $u\in[\tfrac12,1]$, define
		\[
		M(u):=\sup_{s\in\mathbb R}\norm{\Phi(u+is)}_{\Etau}.
		\]
		Since $a,b$ are bounded and the imaginary powers are unitaries, $\|a^{\alpha+is}\|_\infty=\|a^\alpha\|_\infty$ and $\|b^{\alpha+is}\|_\infty=\|b^\alpha\|_\infty$.
		Hence $\Phi$ is bounded on $\Sigma$ by \eqref{eq:bimodule}, so $M(u)<\infty$ for all $u\in[\tfrac12,1]$.
		
		\medskip
		\textbf{Step 2: the boundary $\Re z=1$.}
		For $z=1+is$, we have
		\[
		\Phi(1+is)
		= b^{-is}a^{1+2is}b^{-is}+a^{-is}b^{1+2is}a^{-is}.
		\]
		Writing
		\[
		b^{-is}a^{1+2is}b^{-is}=(b^{-is}a^{is})\,a\,(a^{is}b^{-is}),
		\qquad
		a^{-is}b^{1+2is}a^{-is}=(a^{-is}b^{is})\,b\,(b^{is}a^{-is}),
		\]
		and noting the outer factors are unitaries, Lemma~\ref{lem:unitary-inv} gives
		\[
		\norm{\Phi(1+is)}_{\Etau}\le \norm{a}_{\Etau}+\norm{b}_{\Etau}\le 2\norm{a+b}_{\Etau}.
		\]
		Thus $M(1)\le 2\norm{a+b}_{\Etau}$.
		
		\medskip
		\textbf{Step 3: the line $\Re z=\tfrac34$.}
		For $z=\tfrac34+is$, write $u_s:=b^{is}$ and $v_s:=a^{is}$ (unitaries).
		A direct computation yields
		\[
		\Phi\Bigl(\tfrac34+is\Bigr)
		= u_{-s}\Bigl(b^{1/4}a^{1/2}v_{2s}b^{1/4}\Bigr)u_{-s}
		+ v_{-s}\Bigl(a^{1/4}b^{1/2}u_{2s}a^{1/4}\Bigr)v_{-s}.
		\]
		By Lemma~\ref{lem:unitary-inv} and the triangle inequality,
		\begin{equation}\label{eq:phi34}
			\norm{\Phi(\tfrac34+is)}_{\Etau}
			\le
			\norm{b^{1/4}a^{1/2}v_{2s}b^{1/4}}_{\Etau}
			+
			\norm{a^{1/4}b^{1/2}u_{2s}a^{1/4}}_{\Etau}.
		\end{equation}
		
		Consider the first term and factor
		\[
		b^{1/4}a^{1/2}v_{2s}b^{1/4}
		=\bigl(b^{1/4}a^{1/4}v_{s}\bigr)\,\bigl(v_sa^{1/4}b^{1/4}\bigr).
		\]
		Let $A:=b^{1/4}a^{1/4}v_s$ and $B:=v_sa^{1/4}b^{1/4}$. Then
		\[
		|A|^2=A^*A=v_s^*\,|b^{1/4}a^{1/4}|^2\,v_s,\qquad |B|^2=B^*B=|a^{1/4}b^{1/4}|^2.
		\]
		By Lemma~\ref{lem:holder} with $(m,n)=(2,2)$ and Lemma~\ref{lem:unitary-inv},
		\[
		\norm{b^{1/4}a^{1/2}v_{2s}b^{1/4}}_{\Etau}
		\le
		\norm{\abs{b^{1/4}a^{1/4}}^2}_{\Etau}^{1/2}\,
		\norm{\abs{a^{1/4}b^{1/4}}^2}_{\Etau}^{1/2}.
		\]
		The same estimate holds for the second term in \eqref{eq:phi34}. Hence
		\[
		\norm{\Phi(\tfrac34+is)}_{\Etau}
		\le
		2\,
		\norm{\abs{b^{1/4}a^{1/4}}^2}_{\Etau}^{1/2}\,
		\norm{\abs{a^{1/4}b^{1/4}}^2}_{\Etau}^{1/2}.
		\]
		By Lemma~\ref{lem:alt-submaj} with $r=2$,
		\[
		\abs{b^{1/4}a^{1/4}}^2\precprec b^{1/2}a^{1/2},
		\qquad
		\abs{a^{1/4}b^{1/4}}^2\precprec a^{1/2}b^{1/2}.
		\]
		By full symmetry and $\norm{u}_{\Etau}=\norm{u^*}_{\Etau}$,
		\[
		\norm{\abs{b^{1/4}a^{1/4}}^2}_{\Etau}\le \norm{a^{1/2}b^{1/2}}_{\Etau},
		\qquad
		\norm{\abs{a^{1/4}b^{1/4}}^2}_{\Etau}\le \norm{a^{1/2}b^{1/2}}_{\Etau}.
		\]
		Therefore
		\[
		\norm{\Phi(\tfrac34+is)}_{\Etau}
		\le 2\norm{a^{1/2}b^{1/2}}_{\Etau}
		\le \norm{a+b}_{\Etau},
		\]
		where the last step is \eqref{eq:AMGM}. Thus $M(\tfrac34)\le \norm{a+b}_{\Etau}$.
		
		\medskip
		\textbf{Step 4: the line $\Re z=\tfrac12$.}
		For $z=\tfrac12+is$ one can rewrite
		\[
		\Phi(\tfrac12+is)
		=
		a^{1/2}v_{-s}u_{2s}v_{-s}a^{1/2}
		+
		b^{1/2}u_{-s}v_{2s}u_{-s}b^{1/2}.
		\]
		Let $R=[\,a^{1/2}\ \ b^{1/2}\,]$ (a row operator) and define the diagonal unitary
		\[
		W_s:=\diag\bigl(v_{-s}u_{2s}v_{-s},\ u_{-s}v_{2s}u_{-s}\bigr)\in \mathbb M_2(\M).
		\]
		Then $\Phi(\tfrac12+is)=R\,W_s\,R^*$ and $a+b=R\,R^*$.
		As in the standard $2\times2$ trick, the block matrix
		\[
		\begin{pmatrix}
			a+b & \Phi(\tfrac12+is)\\
			\Phi(\tfrac12+is)^* & a+b
		\end{pmatrix}
		=
		\begin{pmatrix}
			R & 0\\
			0 & R
		\end{pmatrix}
		\begin{pmatrix}
			I & W_s\\
			W_s^* & I
		\end{pmatrix}
		\begin{pmatrix}
			R^* & 0\\
			0 & R^*
		\end{pmatrix}
		\]
		is positive, because $W_s$ is unitary and
		\[
		\begin{pmatrix}I&W_s\\W_s^*&I\end{pmatrix}
		=
		\begin{pmatrix}I\\W_s^*\end{pmatrix}
		\begin{pmatrix}I&W_s\end{pmatrix}\ge0.
		\]
		By Lemma~\ref{lem:douglas}, there exists a contraction $c_s\in \M$ such that
		\[
		\Phi(\tfrac12+is)=(a+b)^{1/2}c_s(a+b)^{1/2}.
		\]
		Since $c_s\in \M$ and $a+b\in\Etau$, the bimodule estimate \eqref{eq:bimodule} implies
		$(a+b)c_s$ and $c_s(a+b)$ belong to $\Etau$.
		Applying Theorem~\ref{thm:heinz-Etau} with $a=b=a+b$, $z=c_s$ and $t=\tfrac12$ gives
		\[
		2\,\norm{\Phi(\tfrac12+is)}_{\Etau}
		\le \norm{(a+b)c_s+c_s(a+b)}_{\Etau}.
		\]
		Using \eqref{eq:bimodule} and $\norm{c_s}_\infty\le 1$,
		\[
		\norm{(a+b)c_s+c_s(a+b)}_{\Etau}
		\le \norm{(a+b)c_s}_{\Etau}+\norm{c_s(a+b)}_{\Etau}
		\le 2\norm{a+b}_{\Etau}.
		\]
		Hence $M(\tfrac12)\le \norm{a+b}_{\Etau}$.
		
		\medskip
		\textbf{Step 5: interpolate.}
		Apply Lemma~\ref{lem:three-lines} to the reparametrization $g(w):=\Phi(\tfrac12+\tfrac14 w)$ on $0\le \Re w\le 1$.
		By Steps~3--4, $M(\tfrac12)\le \|a+b\|_{\Etau}$ and $M(\tfrac34)\le \|a+b\|_{\Etau}$, hence
		$M(t)\le \|a+b\|_{\Etau}$ for all $t\in[\tfrac12,\tfrac34]$.
		
		For $t\in[\tfrac34,1]$, apply Lemma~\ref{lem:three-lines} to $h(w):=\Phi(\tfrac34+\tfrac14 w)$ on $0\le \Re w\le 1$.
		Then for $t=(1-\theta)\tfrac34+\theta\cdot 1$ (so $\theta=4t-3$),
		\[
		\|\Phi(t)\|_{\Etau}\le M(t)\le \bigl(M(\tfrac34)\bigr)^{1-\theta}\bigl(M(1)\bigr)^\theta
		\le \|a+b\|_{\Etau}^{1-\theta}(2\|a+b\|_{\Etau})^\theta
		=2^{\,4t-3}\|a+b\|_{\Etau}.
		\]
		Since $f_t=\Phi(t)$, we have proved the desired bound in the bounded invertible setting.
		
		\medskip
		\textbf{Step 6: pass to general $a,b\in\Etau$.}
		
		\smallskip
		\emph{Case 1: $\tau(\one)<\infty$ or $\one\in E$.}
		Fix $n$ and apply the bound to $a_{n,\varepsilon},b_{n,\varepsilon}$:
		\[
		\|f_t(a_{n,\varepsilon},b_{n,\varepsilon})\|_{\Etau}
		\le 2^{\max\{4t-3,0\}}\ \|a_{n,\varepsilon}+b_{n,\varepsilon}\|_{\Etau}.
		\]
		As $\varepsilon\downarrow0$, functional calculus is norm-continuous on bounded sets (since $1-t,2t-1>0$),
		hence $f_t(a_{n,\varepsilon},b_{n,\varepsilon})\to f_t(a_n,b_n)$ in operator norm, and therefore in measure.
		By Lemma~\ref{lem:fatou-lsc},
		\[
		\|f_t(a_n,b_n)\|_{\Etau}\le 2^{\max\{4t-3,0\}}\ \|a+b\|_{\Etau}.
		\]
		Now let $n\to\infty$. Since $a_n\to a$ and $b_n\to b$ in measure and $1-t,2t-1>0$, Lemmas~\ref{lem:trunc-powers} and \ref{lem:measure-cont}
		yield $f_t(a_n,b_n)\to f_t(a,b)$ in measure. Applying Lemma~\ref{lem:fatou-lsc} again gives
		\[
		\|f_t(a,b)\|_{\Etau}\le 2^{\max\{4t-3,0\}}\|a+b\|_{\Etau}.
		\]
		
		\smallskip
		\emph{Case 2: $\tau(\one)=\infty$ and $\one\notin E$.}
		Fix $k,n$ and apply the bounded-invertible estimate in the finite algebra $e_k\M e_k$ to obtain
		\[
		\|f_t(a_{k,n,\varepsilon},b_{k,n,\varepsilon})\|_{\Etau}
		\le 2^{\max\{4t-3,0\}}\ \|a_{k,n,\varepsilon}+b_{k,n,\varepsilon}\|_{\Etau}.
		\]
		Note that
		\[
		a_{k,n}+b_{k,n}=e_k\bigl((a\wedge n)+(b\wedge n)\bigr)e_k.
		\]
		Thus, by \eqref{eq:bimodule} and $(a\wedge n)+(b\wedge n)\le a+b$,
		\[
		\|a_{k,n}+b_{k,n}\|_{\Etau}
		\le \|(a\wedge n)+(b\wedge n)\|_{\Etau}
		\le \|a+b\|_{\Etau}.
		\]
		Moreover, $a_{k,n,\varepsilon}+b_{k,n,\varepsilon}=(a_{k,n}+b_{k,n})+2\varepsilon e_k$ and $\|e_k\|_{\Etau}<\infty$ by Remark~\ref{rem:finite-corner-in-Etau}.
		Hence
		\[
		\|a_{k,n,\varepsilon}+b_{k,n,\varepsilon}\|_{\Etau}\le \|a_{k,n}+b_{k,n}\|_{\Etau}+2\varepsilon\|e_k\|_{\Etau}.
		\]
		Letting $\varepsilon\downarrow0$ and using Lemma~\ref{lem:fatou-lsc} (for the left-hand side) yields
		\[
		\|f_t(a_{k,n},b_{k,n})\|_{\Etau}\le 2^{\max\{4t-3,0\}}\ \|a_{k,n}+b_{k,n}\|_{\Etau}
		\le 2^{\max\{4t-3,0\}}\ \|a+b\|_{\Etau}.
		\]
		For fixed $k$, letting $n\to\infty$ and using Lemma~\ref{lem:measure-cont} yields
		$f_t(a_{k,n},b_{k,n})\to f_t(e_kae_k,e_kbe_k)$ in measure, hence
		\[
		\|f_t(e_kae_k,e_kbe_k)\|_{\Etau}\le 2^{\max\{4t-3,0\}}\ \|a+b\|_{\Etau}\qquad(k\ge1).
		\]
		Finally, Lemma~\ref{lem:seq-cutdown} implies $e_kae_k\to a$ and $e_kbe_k\to b$ in measure, so again
		$f_t(e_kae_k,e_kbe_k)\to f_t(a,b)$ in measure, and Lemma~\ref{lem:fatou-lsc} yields the desired bound.
	\end{proof}
	
	\subsection{A bridge inequality via $2\times2$ block positivity}
	
	\begin{prop}\label{prop:bridge}
		Let $t\in[\tfrac12,1]$. Then
		\[
		\norm{b_t}_{\Etau}\le \norm{a+b}_{\Etau}^{1/2}\ \norm{f_t}_{\Etau}^{1/2}.
		\]
	\end{prop}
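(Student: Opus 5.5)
Let $t\in[\tfrac12,1]$ and write $\alpha:=1-t$, $\beta:=2t-1$, so $t=\alpha+\beta$. The plan is to factor $b_t$ as a product $XY^*$ of two row operators. The $2\times 2$ Gram matrices of these rows should reproduce $a+b$ and $f_t$. A Douglas/Cauchy--Schwarz step then finishes the argument.

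Write $a^tb^{1-t}=a^{1/2}\cdot a^{t-1/2}b^{\alpha}$. Since $t-\tfrac12=\beta/2$, this is $a^{1/2}(b^{\alpha}a^{\beta/2})^*$. Likewise $b^ta^{1-t}=b^{1/2}(a^{\alpha}b^{\beta/2})^*$. Set the row operators
\[
R=[\,a^{1/2}\ \ b^{1/2}\,],\qquad Q=[\,b^{\alpha}a^{\beta/2}\ \ a^{\alpha}b^{\beta/2}\,].
\]
Then $b_t=RQ^*$, $RR^*=a+b$, and
\[
QQ^*=b^{\alpha}a^{\beta}b^{\alpha}+a^{\alpha}b^{\beta}a^{\alpha}=f_t.
\]
Hence
\[
\begin{pmatrix} a+b & b_t\\ b_t^* & f_t\end{pmatrix}=\begin{pmatrix}R\\ Q\end{pmatrix}\begin{pmatrix}R^* & Q^*\end{pmatrix}\ge 0.
\]
At $t=\tfrac12$ we have $\beta=0$, so the convention $u^0=s(u)$ enters. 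The identity still holds, because $a^{1/2}s(a)=a^{1/2}$.

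By Lemma~\ref{lem:douglas}, there is a contraction $z\in\M$ with $b_t=(a+b)^{1/2}zf_t^{1/2}$. It then remains to show
\[
\|x^{1/2}zy^{1/2}\|_{\Etau}\le\|x\|_{\Etau}^{1/2}\|y\|_{\Etau}^{1/2}\qquad (x,y\ge0).
\]
This follows from Lemma~\ref{lem:holder} with $m=n=2$ applied to the factors $x^{1/2}$ and $zy^{1/2}$. The first factor satisfies $|x^{1/2}|^2=x$. For the second,
\[
|zy^{1/2}|^2=y^{1/2}z^*zy^{1/2}\le y,
\]
so by solidity $\||zy^{1/2}|^2\|_{\Etau}\le\|y\|_{\Etau}$. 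For this we need $f_t\in\Etau$, which Theorem~\ref{thm:f-bound} provides; $a+b\in\Etau$ is given.

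The main obstacle is rigor in $\Stau$ rather than the algebra. The products of unbounded operators, and the block factorization in $\mathbb M_2(\Stau)$, must be handled as closures (strong products). This is harmless because $\Stau$ is a $*$-algebra and $\mathbb M_2(\Stau)$ is the measurable-operator algebra of $\mathbb M_2(\M)$ with the trace $\tau\otimes\mathrm{tr}$. If one prefers to avoid this, an alternative is to first reduce to bounded invertible $a,b$ by the same truncation and corner approximation as in Step~0 and Step~6 of Theorem~\ref{thm:f-bound}. One then passes to the limit using Lemma~\ref{lem:measure-cont} and Lemma~\ref{lem:fatou-lsc}; the continuity of $f_t$ and $b_t$ in the measure topology was already verified there.
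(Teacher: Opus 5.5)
Your proof is correct and is essentially the paper's: the $2\times 2$ matrix with rows $R$ and $Q$ has as its columns exactly the paper's vectors $u$ and $v$, so you get the same positive block $\begin{pmatrix} a+b & b_t\\ b_t^* & f_t\end{pmatrix}$, apply the same Douglas factorization, and finish with the same $(2,2)$ H\"older inequality. The only cosmetic difference is that you group the contraction with $f_t^{1/2}$ and use $y^{1/2}z^*zy^{1/2}\le y$ plus solidity, whereas the paper groups it with $(a+b)^{1/2}$ and uses $\mu(w^*(a+b)w)\le\mu(a+b)$.
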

	
	\begin{proof}
		Let $t\in[\tfrac12,1]$. Consider the column operators
		\[
		u:=
		\begin{pmatrix}
			a^{1/2}\\[2pt]
			b^{1-t}a^{t-1/2}
		\end{pmatrix},
		\qquad
		v:=
		\begin{pmatrix}
			b^{1/2}\\[2pt]
			a^{1-t}b^{t-1/2}
		\end{pmatrix}.
		\]
		A direct computation in $\mathbb M_2(\Stau)$ gives
		\[
		uu^*=
		\begin{pmatrix}
			a & a^t b^{1-t}\\
			b^{1-t}a^t & b^{1-t}a^{2t-1}b^{1-t}
		\end{pmatrix},
		\qquad
		vv^*=
		\begin{pmatrix}
			b & b^t a^{1-t}\\
			a^{1-t}b^t & a^{1-t}b^{2t-1}a^{1-t}
		\end{pmatrix}.
		\]
		Adding,
		\[
		uu^*+vv^*
		=
		\begin{pmatrix}
			a+b & b_t\\
			b_t^* & f_t
		\end{pmatrix}
		\ge 0.
		\]
		By Lemma~\ref{lem:douglas}, there exists a contraction $w\in \M$ such that
		\[
		b_t=(a+b)^{1/2}\,w\,f_t^{1/2}.
		\]
		Since $\|w\|_\infty\le 1$, \eqref{eq:mu-axb} yields $\mu(w^*(a+b)w)\le \mu(a+b)$ and hence $w^*(a+b)w\precprec a+b$.
		By full symmetry, $\|w^*(a+b)w\|_{\Etau}\le \|a+b\|_{\Etau}$.
		
		Finally, apply Lemma~\ref{lem:holder} with $(2,2)$ to $x:=(a+b)^{1/2}w$ and $y:=f_t^{1/2}$:
		\[
		\|b_t\|_{\Etau}
		=\|(a+b)^{1/2}w f_t^{1/2}\|_{\Etau}
		\le \||x|^2\|_{\Etau}^{1/2}\ \||y|^2\|_{\Etau}^{1/2}
		= \|w^*(a+b)w\|_{\Etau}^{1/2}\ \|f_t\|_{\Etau}^{1/2}.
		\]
		Combining gives the claim.
	\end{proof}
	
	\subsection{Proof of Theorem~\ref{thm:main}}
	
	\begin{proof}[Proof of Theorem~\ref{thm:main}]
		Fix $t\in[\tfrac12,1]$.
		Proposition~\ref{prop:bridge} and Theorem~\ref{thm:f-bound} yield
\begin{align*}
	\norm{b_t}_{\Etau}
	&\le \norm{a+b}_{\Etau}^{1/2}\,\norm{f_t}_{\Etau}^{1/2}\\
	&\le \norm{a+b}_{\Etau}^{1/2}\,\bigl(2^{\max\{4t-3,0\}}\norm{a+b}_{\Etau}\bigr)^{1/2}\\
	&=2^{\max\{2t-\tfrac32,\,0\}}\norm{a+b}_{\Etau}.
\end{align*}

		For $t\in[0,\tfrac12]$, note that $b_t^*=b_{1-t}$ and $\|z\|_{\Etau}=\|z^*\|_{\Etau}$. Applying the above estimate to
		$1-t\in[\tfrac12,1]$ gives
		\[
		\norm{b_t}_{\Etau}=\norm{b_{1-t}}_{\Etau}
		\le 2^{\max\{2(1-t)-\tfrac32,\,0\}}\norm{a+b}_{\Etau}
		=2^{\max\{\tfrac12-2t,\,0\}}\norm{a+b}_{\Etau}.
		\]
		Combining the two ranges yields
		\[
		\norm{b_t}_{\Etau}\le 2^{\max\{\,2\abs{t-\tfrac12}-\tfrac12,\;0\,\}}\;\norm{a+b}_{\Etau},
		\]
		which is exactly \eqref{eq:main}. The ``in particular'' statement follows since the exponent
		vanishes for $t\in[\tfrac14,\tfrac34]$.
	\end{proof}
	
	\begin{rem}
		The factor $2^{\max\{\,2\abs{t-1/2}-1/2,\;0\,\}}$ equals $1$ on $[\tfrac14,\tfrac34]$ and
		reaches at most $\sqrt2$ on $[0,1]$.
		At the endpoints $t=0,1$, the expression involves the convention $a^0=s(a)$ and $b^0=s(b)$; in particular,
		if $a$ and $b$ have full support (e.g.\ are injective in a finite corner), then $b_0(a,b)=b_1(a,b)=a+b$
		and the optimal constant is $1$.
		
		\smallskip
		The constant $1$ in the central range $t\in[\tfrac14,\tfrac34]$ is sharp in the sense that it cannot be replaced by any
		$c<1$: indeed, for $a=b\neq0$ we have $b_t(a,a)=2a$ for all $t\in[0,1]$, while $a+b=2a$, hence equality holds in
		\eqref{eq:main}.
	\end{rem}
	
	\section*{Acknowledgments}
	Teng Zhang is supported by the China Scholarship Council, the Young Elite Scientists Sponsorship
	Program for PhD Students (China Association for Science and Technology), and the Fundamental
	Research Funds for the Central Universities at Xi'an Jiaotong University (Grant No.~xzy022024045).
	

\end{document}